\newcommand{\mscr}[1]{\mathscr{#1}}
\newcommand{\dvbt}{double vector bundle\xspace}
\newcommand{\dvbts}{double vector bundles\xspace}
\newtheorem{thm}{Theorem}[section]
\newtheorem{lem}[thm]{Lemma}
\newtheorem{prop}[thm]{Proposition}
\theoremstyle{definition}
\newtheorem{defin}[thm]{Definition}
\newtheorem{ej}[thm]{Example}
\theoremstyle{remark}
\newtheorem{rem}[thm]{Remark}
\newcommand{\pf}{\noindent{\textsc{Proof.}\ }}
\newcommand{\pfend} {\hspace*{\fill}\lower3pt\hbox{$\Box$}\medskip} 
\newcommand{\co}{\colon\thinspace}
\newcommand{\Ga}{\Gamma}
\newcommand{\lhangle}{\,\rule[-0.1ex]{0.4ex}{0.7\baselineskip}\,\,}
\newcommand{\rhangle}{\,\,\rule[-0.1ex]{0.4ex}{0.7\baselineskip}\,}
\newcommand{\bk}{\,\vert\, }
\renewcommand{\Bar}[1]{\overline{#1}}
\renewcommand{\Tilde}[1]{\widetilde{#1}}
\renewcommand{\phi}{\varphi}
\newcommand{\pair}[2]{\langle #1\,\vert\,#2\rangle}
\newcommand{\rr}{\mathbb{R}}
\newcommand{\zz}{\mathbb{Z}}
\newcommand{\sol}{\bullet}
\newcommand{\Lra}{\longrightarrow}
\newcommand{\id}{\operatorname{id}}
\newcommand{\End}{\operatorname{End}}
\DeclareMathOperator{\Dec}{Dec}
\DeclareMathOperator{\Stat}{Stat}
\renewcommand{\leq}{\leqslant}
\newcommand{\BE}{E_\sol} 
\newcommand{\bz}{\mathop{*}} 
\newcommand{\ABC}{{A \bz B \bz C}}
\newcommand{\Wii}{G_2} 
\newcommand{\decom}[1]{\overline{#1}}
\newcommand{\duer}{\mbox{${}^\times\kern-.4em\rule{0.3pt}{8pt}\,\,$}} 
\newcommand{\du}{^{\star}}
\newcommand{\ts}[1]{#1}
\newcommand{\C}{\mscr{C}}
\newcommand{\un}[2]{\underset{#2}{#1}}
\newcommand{\tilo}{\widetilde 0}           
\newcommand{\cinf}[1]{C^{\infty}(#1)}
\newcommand{\add}[1]{\mathbin{\lower 7pt%
    \hbox{${\stackrel{\textstyle +}{\scriptscriptstyle #1}}$}}}
\newcommand{\dg}{\mscr{D}\!\mscr{F}} 
\newcommand{\W}{\mscr{W}} 
\newcommand{\dvb}[8]{\xymatrix{#1 \ar[r]^{#6} \ar[d]_{#5} & #3 \ar[d]^{#8} \\ #2 \ar[r]^{#7} & #4}}
\newcommand{\dvbs}[4]{\dvb{#1}{#2}{#3}{#4}{}{}{}{}}
\newcommand{\vb}[3]{\xymatrix{#1 \ar[r]^{#3} & #2}}
\newcommand{\emb}[3]{\xymatrix{#1 \ar@{^{(}->}[r]^{#3} & #2}}
\newcommand{\dvbc}[5]{\xymatrix@=8pt{ #1 \ar[rr] \ar[dd] && #3 \ar[dd] \\ & #4 \ar@{_{(}->}[ul] \\  #2 \ar[rr] && #5}} 
\newcommand{\ses}[3]{\SelectTips{cm}{12}%
  \xymatrix@=8mm{#1\ar@{ >->}[r] & #2 \ar@{->>}[r] & #3}}
\newcommand{\smallcube}[8]{ \xymatrix@=5mm{ #1 \ar[rr] \ar[dd] \ar[rd] && #2 \ar[rd] \ar'[d][dd] \\ & #3 \ar[rr] \ar[dd] && #4 \ar[dd] \\ #5 \ar[rd] \ar'[r][rr] && #6 \ar[rd] \\ & #7 \ar[rr] && #8}}
\begin{document}
\title{\textbf{Duality functors for triple vector bundles}\footnote{Mathematics Subject 
Classification (MSC2000): 53D17 (primary), 18D05, 18D35, 20E99, 55R99, 70H50 (secondary).}
\footnote{Keywords:\ double vector bundles, triple vector bundles, iterated tangent bundles, 
duality of multiple vector bundles, Lie algebroids, Poisson structures, extensions of symmetric 
groups, groups of order 96.}} 

\author{
\begin{tabular}{lcl}
Alfonso Gracia-Saz & and \quad & K. C. H. Mackenzie\\
Department of Mathematics                   & &        School of Mathematics and Statistics\\
University of Toronto                   & &        University of Sheffield\\
Toronto ON, M5S 2E4                   & &        Sheffield, S3 7RH\\
Canada                  & &        United Kingdom\\
\url{alfonso@math.toronto.edu} & &   \url{K.Mackenzie@sheffield.ac.uk}
\end{tabular}
}

\date{July 5, 2009}

\maketitle

\begin{abstract}
We calculate the group of dualization operations for triple vector bundles, showing
that it has order 96 and not 72 as given in Mackenzie's original treatment. The group
is a nonsplit extension of $S_4$ by the Klein group. 
Dualization operations are interpreted as functors on appropriate categories and
are said to be equal if they are naturally isomorphic. The method set out here will
be applied in a subsequent paper to the case of $n$-fold vector bundles. 
\end{abstract}


\section*{Introduction.}

The duality of finite-rank vector bundles $E$ is involutive, $(E^*)^*\cong E$, and so
may be said to have group $\zz_2$. In \cite{Mackenzie:1999} and \cite{Mackenzie:2005dts}, 
Mackenzie initiated the study of duality for multiple vector bundles, showing that the
group of dualization operations of a double vector bundle is the symmetric group of
order six, and stating that the corresponding group for triple vector bundles has
order 72. 

Shortly before \cite{Mackenzie:2005dts} went to press, Gracia-Saz convinced Mackenzie 
that the order of this group is 96. In the present paper we prove that this is indeed
so, by a method which we will subsequently extend to  $n$-fold vector 
bundles \cite{Gracia-SazM:nfold}. The approach in \cite{Mackenzie:2005dts}
considered only the triple vector bundles themselves; in this paper, we show that the 
correct question to pose is when two dualization operations are equal. The crux 
of the method is to regard dualization operations as functors on appropriate categories, 
and to regard two dualization operations as equal if they are naturally isomorphic as 
functors.   

One of the motivating examples at the inception of category theory \cite{MacLane:pres}
was the distinction between the isomorphisms which can be defined between a (finite-dimensional) 
vector space $V$ and its dual by use of a basis, and the natural isomorphism which exists
between $V$ and $(V^*)^*.$ Our method extends this familiar idea to multiple vector 
bundles. For ordinary vector bundles, local trivializations are transformed into each
other by maps into a general linear group, and dualizing the bundle corresponds to taking
the transpose in the group. Multiple vector bundles can always be globally decomposed --- that 
is, they are always isomorphic to a combination of pullbacks of ordinary vector bundles ---  and 
there is a corresponding group of what we call statomorphisms (see Definition \ref{df:stato3}). 
In order to distinguish between two dualization operations for triple vector bundles 
it is not sufficient to consider the action on the constituent bundles, as was done in 
\cite{Mackenzie:2005dts} --- it is necessary to study their effect on the statomorphism group. 

Double vector bundles have been used for many years in some accounts of connection theory
\cite{Besse} and in some approaches to theoretical mechanics \cite{Tulczyjew}. Their general 
theory was initiated by Pradines \cite{Pradines:DVB}. In the 1990s double and triple vector
bundles began to arise in Poisson geometry, as a result of the relationships between Poisson
structures and Lie algebroids. At the simplest level, the dual of a Lie algebroid $A$ has a 
Poisson structure, the linearity properties of which can be expressed as the condition 
that the Poisson anchor $\pi^\#$ is a morphism of double vector bundles $T^*A^*\to TA^*$. 
Secondly, given a Lie bialgebroid $(A,A^*)$, the cotangent double $T^*A^*\cong T^*A$ can play 
a role corresponding to that of the classical Drinfel'd double of a Lie bialgebra. More 
generally, the duality of double vector bundles, and the triple vector bundles associated with 
them, are crucial to the compatibility conditions of \cite{Mackenzie:eddd} between the Lie 
algebroid structures in a double Lie algebroid. The results of the present paper will be applied 
elsewhere to the triple Lie algebroids which arise from double Lie algebroids, and in particular 
to understanding the duals of thrice-iterated tangent bundles $T^3M$. 
No familiarity with Lie algebroids or Poisson structures is needed in this paper. 

The duality group for double vector bundles $(D;A,B;M)$ is the symmetric group $S_3$ and duality 
operations in this case can be identified by their action on the three `building bundles': the 
side bundles $A$ and $B$ and the core dual $C^*$. Indeed the duality group for double vector bundles 
can be interpreted as those rotations of the cotangent triple $T^*D$ which preserve $T^*D$ 
and $M$ \cite{Mackenzie:2005dts}. In the triple case the corresponding result does not hold: there 
are three nontrivial duality operations which preserve the building bundles but are not equivalent 
to the identity. The significance of these operations deserves to be investigated further. 

In \S\ref{sect:intro} we briefly review the basic notions for double vector bundles; some
readers may prefer to start with \S\ref{sect:df}. To introduce the methods of this paper and 
of \cite{Gracia-SazM:nfold}, we begin by illustrating them on the known case of double vector 
bundles. The duality theory in this case was set out in  \cite[\S1--\S3]{Mackenzie:2005dts} 
but we reformulate it in \S\ref{sect:df} to clarify the problems which arise in the case of 
triple vector bundles and to introduce the functorial point of view which we use in this 
paper and in \cite{Gracia-SazM:nfold}. In Definition \ref{defin:dg2} we define the group $\dg_2$, 
the group of duality functors modulo natural equivalence, which we usually refer to simply
as the \emph{duality group for double vector bundles}.

In \S\ref{sect:tvb} we set up a notation for triple vector bundles that will extend readily 
to the $n$-fold case. The main work of the paper is in \S\ref{sect:dtvb}. This is the calculation
of the duality group $\dg_3$ for triple vector bundles, in terms of its action on the group of 
statomorphisms (Definition \ref{df:stato3}). The final \S\ref{sect:group} provides some further 
information on the structure of $\dg_3$. Throughout the paper we consider smooth vector bundles 
of finite rank over the reals. 

Multiple vector bundles in the setting of supergeometry are being developed by Voronov 
\cite{Voronov:mtqm} and Grabowski \cite{GrabowskiR:mvbs}. 

Our work on this paper began after Mackenzie had spoken on \cite{Mackenzie:2005dts} at
Poisson 2004 in Luxembourg. Gracia--Saz's research was partially supported by fellowships 
from the \emph{Secretar\'ia de Estado de Universidades e Investigaci\'on del Ministerio 
Espa\~{n}ol de Educaci\'on y Ciencia} and from the Japanese Society for the Promotion of 
Science. Gracia--Saz would also like to thank the Pure Mathematics department of the 
University of Sheffield for its hospitality during various visits, and both authors thank
the London Mathematical Society for funding one of these visits. The authors also very much
appreciate the comments of the referees.


\section{Review of double vector bundles} 
\label{sect:intro}

A \emph{double vector bundle} $(D;A,B;M)$, as shown on the left of Figure~\ref{fig:dvbdef}, 
consists firstly of a manifold $D$ together with two vector bundle structures, on bases $A$ and $B$, 
each of which is itself a vector bundle on base $M$, such that for each structure on $D$, 
the structure maps (projection, addition, scalar multiplication) are vector bundle 
morphisms with respect to the other structure. A \emph{morphism of double vector bundles} 
from $(D;A,B;M)$ to $(D';A',B';M')$ is a system of maps 
$\phi\co D\to D'$, $\phi_A\co A\to A'$, $\phi_B\co B\to B'$, $\phi_M\co M\to M'$ 
such that each of $(\phi,\phi_A)$, $(\phi,\phi_B)$, $(\phi_A,\phi_M)$ and $(\phi_B, \phi_M)$
is a morphism of vector bundles. 

Two examples to keep in mind, also shown in Figure~\ref{fig:dvbdef}, are the tangent prolongation 
of an ordinary vector bundle $A$, and the double vector bundle $A\bz B$ which is formed from two 
vector bundles $A$ and $B$ on $M$ by giving the pullback manifold $A\times_M B$ the pullback vector 
bundle structures $q_A^!B$ and $q_B^!A$. (We denote the pullback of $B$ over $q_A$ by $q_A^!B$ 
instead of $q_A^\star B$, so as not to confuse the symbol $\star$ with the many duals that appear 
in this paper.) We use the notation $A \bz B$ to distinguish this double vector bundle from the 
Whitney sum $A \oplus B$;  as manifolds they are the same, but we regard $A\oplus B$ as a vector 
bundle over the base $M$, and $A\bz B$ as a vector bundle over the bases $A$ and $B$. The tangent 
prolongation vector bundle $TA\to TM$ is formed by applying the tangent functor to the structure 
maps of $A\to M$. 

\begin{figure}[h]
\begin{picture}(340,80)(-20,-60) 
\put(30,0){\dvb{D}{A}{B}{M}{q^D_A}{q^D_B}{q_A}{q_B}}
\put(130,0){\dvb{TA}{A}{TM}{M}{}{}{q_A}{}}
\put(230,0){\dvb{A \bz B}{A}{B}{M}{}{}{q_A}{q_B}}
\end{picture}
\caption{\label{fig:dvbdef}}
\end{figure}

We also need a further condition, which was not made explicit in \cite{Mackenzie:2005dts}
or \cite{Mackenzie:GT}. 

\begin{defin}
A double vector bundle $(D;A,B;M)$ satisfies the \emph{splitting condition} if there is a
morphism of double vector bundles $\Sigma\co A\bz B\to D$ which preserves $A$ and $B$; such
a map is a \emph{splitting of} $D$. 
\end{defin}

Equivalently, a splitting is a map $\Sigma\co A\times_M B\to D$ which is right-inverse
to the combination of the two projections $D\to A\times_M B$ and, when regarded as 
$q_A^!B\to D$, is linear over $A$ and, when regarded as $q_B^!A\to D$, is linear over $B$. 
We note below that the splitting condition is equivalent to the existence of a 
decomposition, and as such it is the counterpart of requesting local triviality in the 
definition of vector bundle.  We could request only a local splitting condition, but a standard 
\v{Cech} cohomology argument shows that this implies the global splitting condition.
It can be shown that for double vector bundles the splitting condition is implied by the rest 
of the definition \cite{GrabowskiR:mvbs}. In this paper, we include it as part of the definition.

It is standard that connections in a vector bundle $A\to M$ correspond to maps $A\bz TM\to TA$ 
which are linear in each variable and preserve $A$ and $TM$ (see Example \ref{ej:conns}). It is 
easy to check (see below Definition \ref{defin:decom}) that the duals of a double vector 
bundle which satisfies the splitting condition, also satisfy the condition. 


We briefly recall the basic constructions with double vector bundles as used in 
\cite[Chap.~9]{Mackenzie:GT}. In terms of $(D;A,B;M)$ as in Figure~\ref{fig:dvbdef}, 
we refer to $A$ and $B$ as the \emph{side bundles} of $D$, and to $M$ as the \emph{double 
base}. In the two side bundles the addition, scalar multiplication and subtraction are 
denoted by the usual symbols $+$, juxtaposition, and $-$. We distinguish the two 
zero-sections, writing $0^A \co M\to A,\ m\mapsto 0^A_m$, and $0^B \co M\to B,\ m\mapsto
0^B_m$. We may denote an element $d\in D$ by $(d;a,b;m)$ to indicate that 
$a = q^D_A(d),\ b = q^D_B(d),\ m = q_B(q^D_B(d)) = q_A(q^D_A(d)).$

In the \emph{vertical bundle structure} on $D$ with base $A$ the vector bundle operations 
are denoted $+_A, {.}_A, {-}_A$, with $\tilo^A\co A \to D,\ a\mapsto\tilo^A_a$, for 
the zero-section. In the \emph{horizontal bundle structure} on $D$ with base $B$ we 
likewise write $+_B$ and so on. For $m\in M$ the \emph{double zero}
$\rule[-8pt]{0pt}{22pt}\tilo^A_{0^A_m} = \tilo^B_{0^B_m}$ is denoted 
$\odot_m$ or $0^2_m$. 

The map $D\to A*B$ which combines the two bundle projections is a morphism of double
vector bundles and the set of elements of $D$ which map to the double zeros of $A*B$ 
is denoted $C$ and called the \emph{core of} $D$. Thus 
$$
C=\{c \in D ~|~ \exists m\in M \mbox{\ such that}\ q^D_B(c )= 0^B_m,\
q^D_A(c)=0^A_m\}.
$$
The core is an embedded submanifold of $D$ and has a well-defined vector bundle 
structure on base $M$. The projection $q_C$ is the restriction of 
$q_B\circ q^D_B = q_A\circ q^D_A$ and the addition and scalar multiplication 
are the restrictions of either of the operations on $D$. This is the \emph{core 
vector bundle of} $D$. 

As an ordinary vector bundle, $D\to B$ has a dual which we denote $D^X$
or $D\duer B$. There is a second vector bundle structure on $D^X$, with base $C^*$,
and projection $\zeta$ given by
\begin{equation}                             \label{eq:unfprojH}
\langle \zeta(\Psi), c\rangle =
   \langle\Psi, \tilo^B_b +_B c\rangle
\end{equation}
where $c\in C_m,\ \Psi\co (q^D_B)^{-1}(b)\to\rr$ and $b\in B_m.$ The
addition $+_{C^*}$ and scalar multiplication in 
$D^X\to C^*$ are defined by
\begin{gather}                              
\langle\Psi +_{C^*}\Psi', d +_A d'\rangle =
   \langle\Psi,d\rangle + \langle\Psi',d'\rangle\\
\langle t ._{C^*}\Psi, t ._A d\rangle = t\langle\Psi,d\rangle,
\end{gather}
for suitable elements. The zero above $\kappa\in C^*_m$ is denoted $\tilo^{\du B}_\kappa$
and is defined by
\begin{equation}
\langle\,\tilo^{\du B}_\kappa, \tilo^A_a +_B c\rangle =
\langle\kappa, c\rangle
\end{equation}
where $a\in A_m,\ c\in C_m.$ The core of $D^X$ is $A^*$ with $\phi\in A_m^*$ 
defining $\Bar{\phi}\in D^X$ by
$$
\langle\Bar\phi, \tilo^A_a +_B c\rangle =
\langle\phi,a\rangle.
$$
With these structures $(D^X;C^*,B;M)$ is a double vector bundle with core
$A^*$. Likewise the dual of $D\to A$, denoted $D^Y$ or $D\duer A$, has a vector 
bundle structure over $C^*$, making $(D^Y;A,C^*;M)$ a double vector bundle with 
core $B^*$. These are displayed in Figure~\ref{fig:dvb}. 

Observe that the notation $(D;A,B;M)$, and the diagram in the Figure, indicate
that a choice has been made between the two structures on $D$. We may say that such 
a double vector bundle has been \emph{placed}, and that the \emph{flip} of $D$, 
namely $(D^f;B,A;M)$ in which the two structures on $D$ have been interchanged, 
has the opposite \emph{placement}. 

\begin{figure}[h]
\begin{picture}(340,80)(-20,-60) 
\put(30,0){\dvbc{D}{A}{B}{C}{M}}
\put(130,0){\dvbc{D^X}{C^*}{B}{A^*}{M}}
\put(230,0){\dvbc{D^Y}{A}{C^*}{B^*}{M}}
\end{picture}
\caption{\label{fig:dvb}}
\end{figure}

There is a non-degenerate pairing between $D^X$ and $D^Y$ given, with the conventions of
\cite{Mackenzie:GT}, by
\begin{equation}                       
\label{eq:3duals}
\lhangle\Phi, \Psi\rhangle = 
\langle\Phi, d\rangle_A - \langle\Psi, d\rangle_B
\end{equation}
where $\Phi\in D^Y$ and  $\Psi\in D^X$ project to the same element of $C^*$ and $d$ is 
any element of $D$ which can be paired with $\Phi$ over $A$ and with $\Psi$ over $B$. 
This definition depends on the placement of $D$; the pairing for $D^f$ is the
negative of \eqref{eq:3duals}. 

The pairing induces an isomorphism of double vector bundles 
\begin{equation}
\label{eq:Q}
Q\co D^{XYX}\to D^f. 
\end{equation}
This isomorphism is natural in the usual informal sense of the word and preserves 
$A$ and $B$ but it induces minus the identity on the core. One can modify $Q$ so that 
it will induce minus the identity on one of the side bundles instead, but not so that 
it will induce the identity on all of $A$, $B$, and $C$. Accordingly, 
\cite{Mackenzie:2005dts} does not identify $D^{XYX}$ with $D^f$. At the end of this 
section we will show that although there does exist an isomorphism between $D^{XYX}$ and $D^f$ 
which induces the identity on $A$, $B$, and $C$ there is no canonical one.  This shows
that $XYX$ and $f$ are different as functors. 

In \cite{Mackenzie:2005dts} the group of dualization operations, there denoted 
$\EuScript{VB}_2$, is taken to be the group generated by $X$ and $Y$, subject to  $X^2 = I$ 
and $Y^2 = I$ and the identifications which follow from the existence of the non-degenerate 
pairing between the two duals. Here we denote this group by $\dg_2$, for \emph{duality
functors}, and we provide a formal definition for it in \ref{defin:dg2}.


\section{Duality functors} 
\label{sect:df}

In this section we will illustrate the methods which we will use in the triple and 
$n$-fold cases by showing that $XYX$ and $YXY$ are equal as functors, but that $XYX$ and 
$f$ are not. Indeed we will see that $f$ is not an element of the group of duality 
functors. 

\begin{defin}
\label{defin:stat}
Consider two \dvbts $\ts{D}$ and $\ts{D'}$ which have the same side bundles $A$ and $B$, 
and the same core bundle $C$. A \emph{statomorphism} $\phi\co D\to D'$ is a morphism
of \dvbts which preserves $A$, $B$ and $C$. 
\end{defin}

A statomorphism is necessarily an isomorphism $D\to D'$. All dualizations of a
statomorphism are also statomorphisms. 

The role played for vector spaces by the choice of a basis, or equivalently the
choice of an isomorphism $V\to\rr^n$, is played for multiple vector bundles by the 
notion of decomposition. First recall the notion of a decomposed \dvbt. 

Let $A$, $B$ and $C$ be vector bundles over $M$. Consider their fibered product
\begin{equation*}
\ABC = \{ (a,b,c)\in A \times B \times C \; \vert \; 
q^A(a) = q^B(b) = q^C(c) \in M \}. 
\end{equation*}
This has a pullback vector bundle structure over $A$:
\begin{equation*}
(a,b,c) \un{+}{A} (a,b',c') : = (a,b+b',c+c')
\end{equation*}
and a pullback vector bundle structure over $B$:
\begin{equation*}
(a,b,c) \un{+}{B} (a',b,c') : = (a+a',b,c+c')
\end{equation*}
With these structures $\ABC$ is called the \emph{decomposed \dvbt{}} with side bundles 
$A$ and $B$ and core bundle $C$ and denoted $A\bz B\bz C$. The structures on
$A\bz B\bz C$ are $q_A^!(B\oplus C)$ and $q_B^!(A\oplus C)$. These need to be
distinguished from the Whitney sum bundle $A\oplus B\oplus C$ on base $M$. 

In general, if $\ts{D}$ is a \dvbt with side bundles $A$ and $B$ and core bundle $C$, 
we say that $\ts{A\bz B\bz C}$ is the \emph{decomposed \dvbt associated to $\ts{D}$} and 
write $\ts{\decom{D}}: = \ts{A\bz B\bz C}.$

\begin{defin}  \label{defin:decom}
A \emph{decomposition} of $\ts{D}$ is a statomorphism $P\co\ts{D}\to\ts{\decom{D}}$. 

The set of decompositions of the \dvbt $\ts{D}$ will be denoted $\Dec(\ts{D})$.  
\end{defin}

\begin{rem}
Let $U$ be an open subset of $M$ such that the restrictions of the side bundles 
and the core over $U$ are trivializable.  If we compose a decomposition of $D$ 
restricted over $U$ with trivializations of $A$, $B$, and $C$, we obtain a statomorphism onto:
$$\dvbs{U \times V_A \times V_B \times V_C}{U \times V_A}{U \times V_B}{U}$$
where $V_A$, $V_B$, and $V_C$ are the fiber types of $A$, $B$, and $C$ respectively.  
This produces convenient local coordinates for the double vector bundle $D$.  Thanks 
to a \v{C}ech cohomology argument, the existence of local coordinates is equivalent 
to the existence of a decomposition (which, as explained below, is equivalent to the 
splitting condition).
\end{rem}

Given a decomposition $P$, there is a splitting $\Sigma(a,b) = P^{-1}(a,b,0)$ and conversely
given a splitting $\Sigma$, a decomposition is defined by $P^{-1}(a,b,c) = 
\Sigma(a,b) +_A (c +_B\tilo_a).$

Denote by $\Stat(A\bz B \bz C)$ the group of statomorphisms from $A\bz B\bz C$ to itself;
that is, $\Stat(A\bz B \bz C) = \Dec(A \bz B \bz C)$.  
Each element of $\Stat(A\bz B\bz C)$ is of the form
\begin{equation}  \label{eq:philambda}
\phi_\lambda(a,b,c) = (a,b,c+\lambda(a, b))
\end{equation}
where $\lambda\co A\otimes B\to C$ is a linear map. (Throughout the paper we will usually
write tuples in place of tensor elements, when they are the argument of a linear map.) 
Accordingly $\Stat(A\bz B\bz C)$ is a group with composition 
$\phi_\lambda\circ\phi_{\lambda'} = \phi_{\lambda+\lambda'}$
and can be identified with the additive group $\Ga(A^*\otimes B^*\otimes C)$. (Here 
and elsewhere in the paper, the tensor products are over $M$.) We do not
consider the $\cinf{M}$-module structure on $\Ga(A^*\otimes B^*\otimes C)$.
\label{comment:affine}

The duals of $A\bz B\bz C$ are the decomposed \dvbts $C^*\bz B\bz A^*$ and $A\bz C^*\bz B^*$. 
Again, the statomorphisms from $C^*\bz B\bz A^*$ to itself form an abelian group
isomorphic to $\Ga(C\otimes B^*\otimes A^*)$, and those from $A\bz C^*\bz B^*$ to itself 
form an abelian group $\Ga(A^*\otimes C\otimes B^*)$. In what follows we will identify
these three groups without comment and denote them by $G_2$. 

Let $\ts{D}$ be a \dvbt with side bundles $A$ and $B$ and core bundle $C$. Then 
$G_2 = \Stat(A\bz B\bz C)$ acts on $\Dec(\ts{D})$ to the left, and the action is simple and
transitive. Thus $\Dec(\ts{D})$ is a (non--empty) $G_2$--torsor. 

We now wish to consider the effect of the dualization operations on morphisms between
\dvbts, and specifically on decompositions. We first define the relevant 
category. 

\begin{defin}
Let $\mathcal{C}$ be the category whose objects are double vector bundles, and whose 
morphisms are statomorphisms of double vector bundles.
\end{defin}

For the rest of this section, fix three vector bundles $A$, $B$ and $C$ over $M$ 
and, for the moment, denote them by $E_1:=A$, $E_2 := B$ and $E_0 :=C\du$.  (In the
triple case we will use an extension of this notation exclusively.) 
Let $S_3$ be the group of permutations of the set $\{0,1,2\}$.
We will also write $\C^{op}$ or $\C^{-1}$ for the opposite category to $\C$. 

We can now think of $X$, $Y$ and $f$ as functors:
\begin{equation}
\label{eq:XYf}
\vb{\mscr{C}}{\mscr{C}^{op}}{X},\qquad
\vb{\mscr{C}}{\mscr{C}^{op}}{Y},\qquad
\vb{\mscr{C}}{\mscr{C}}{f}
\end{equation}
The effect of $X$ on objects has already been defined
and the action on morphisms is as expected~: 
Let $\phi\co\ts{D_1} \Lra \ts{D_2}$ be a morphism in the category $\mscr{C}$.  
Then $\phi$ is a statomorphism of \dvbts. In particular
$\phi\co D_1\to D_2$ is a morphism of vector bundles over $B$ and so can be
dualized to $\phi^X\co D_2^X\to D_1^X$; this is also a statomorphism of \dvbts
$\ts{D_2^X} \Lra \ts{D_1^X}$ and is thus a morphism 
$\ts{\phi^X}\co \ts{D_1^X} \Lra \ts{D_2^X}$ in the category
$\mscr{C}^{op}$.

Note that, since every morphism in $\C$ is invertible, we can regard $X$ and $Y$ 
as functors $\C \to \C^{op}$ or as functors $\C^{op} \to \C$; hence, we can compose 
them, and we can talk of the group of functors generated by $X$ and $Y$. Denote this
group by $\W_2$. Now for any $W \in \W_2$ there exists a unique permutation $\sigma \in S_3$ 
such that, if $D$ is a double vector bundle with side bundles $E_1$ and $E_2$, and with core 
bundle $E_0^{\star}$, then $D^W$ is a double vector bundle with side bundles 
$E_{\sigma (1)}$ and $E_{\sigma(2)}$, and with core bundle $E_{\sigma(0)}^{\star}$.  
Define $\pi(W) := \sigma$,  and define $\varepsilon_W = \pm1 $ to be the signature 
of $\pi(W)$. Now $\vb{\mscr{C}}{\mscr{C}^{ \varepsilon_W}}{W}$ is a functor.

The flip operation $f$ is most naturally considered as a covariant functor; it can be 
defined as such for all morphisms of \dvbts, not only for statomorphisms. 

Finally, we recall the definition of natural isomorphism.  

\begin{defin}
Let ${\tt Cat}_i$, $i=1,2$, be two categories, and let 
$F, G \co {\tt Cat}_1 \Lra {\tt Cat}_2$ be two functors.  
A \emph{natural transformation}  $s\co F \Lra G$ is a collection of morphisms 
$s(O) \co F(O) \Lra G(O)$ in ${\tt Cat}_2$ for every object $O$ in ${\tt Cat}_1$ 
such that, given any morphism $f\co O \Lra O'$ in ${\tt Cat}_1$, the following diagram 
is commutative:
\begin{equation*}
\dvb{F(O)}{G(O)}{F(O')}{G(O')}{s(O)}{F(f)}{G(f)}{s(O')}
\end{equation*}
If, in addition, $s(O)$ is an isomorphism in ${\tt Cat}_2$ for every object 
$O$ in ${\tt Cat}_1$, the natural transformation $s$ is called a \emph{natural isomorphism}.
\end{defin}

Consider $W \in \W_2$ such that $\pi(W)= 1 \in S_3$. Thus $W$ is a word in $X$ and $Y$, 
and as a functor, $W$ is $\mscr{C}\to\mscr{C}.$ We want criteria for a natural isomorphism
to exist between $W$ and the identity functor. We resume the notation of Figure~\ref{fig:dvb}
so as to facilitate relating this section to \cite{Mackenzie:GT}. 

Let $\ts{D}$ be a \dvbt with side bundles $A$ and $B$ and core $C$. 
Since $\pi(W) = 1$, $\ts{D^W}$ is also a \dvbt with side bundles $A$ and $B$ and 
core $C$.  Choose a decomposition $P\co \ts{D} \to \ts{\decom{D}}$ and apply the functor 
$W$ to it. We obtain a decomposition of $D^W$:
$$
P^W \co \ts{D^W} \Lra \decom{\ts{D^W}} = \ts{{\decom{D}}^W} = \decom{D} =\ABC. 
$$
Hence we obtain a statomorphism $(P^W)^{-1} \circ P\co \ts{D}\to\ts{D^W}$. 
This is our candidate for a natural isomorphism between the identity functor and the 
functor $W$. In order for it to succeed, it should at least be independent of the choice 
of decomposition. The next two theorems together show that this is also a sufficient 
condition. 

The following result is valid for any element of $\W_2$. 
Part (i) has been stated already and is included here for reference. 

\renewcommand{\theenumi}{{\rm (\roman{enumi})}}

\begin{thm} \label{thm:submain}  
Let $W \in \W_2$ and let $\ts{D}$ be a \dvbt with side bundles $A$ and $B$, and core $C$. 
\begin{enumerate}
\item The spaces of decompositions $\Dec(\ts{D})$ and $\Dec(\ts{D^W})$ are $G_2$-torsors. 

\item  Choose a decomposition $P_0 \in \Dec(\ts{D})$.  Then the bijection 
$\vartheta_W \co \Dec(\ts{D}) \to \Dec(\ts{D^W})$ defined by 
$$
\vartheta_W(P) = (P^W)^{\varepsilon_W}, 
$$
induces a group automorphism $\theta_W \co G_2 \to G_2$ such that
$$ 
\vartheta_W(\phi \circ P_0) = \theta_W(\phi) \circ \vartheta_W(P_0)
$$  
for $\phi\in G_2$. 
Moreover, $\theta_W$ does not depend on the choice of $P_0$ (that is, $\vartheta_W$ is a 
morphism of $G_2$-torsors) nor on the choice of the double vector bundle $D$.

\item  The map $\W_2 \times \Wii\to G_2,\quad (W,\lambda) \mapsto \theta_W(\lambda),$
is a group action.  
\end{enumerate}
\end{thm}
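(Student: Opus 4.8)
The plan is to reduce all three parts to formal functoriality, once the torsor structures of (i) are in hand. Part (i) is the torsor statement already recorded above, applied once to $\ts{D}$ and once to $\ts{D^W}$: for any \dvbt with side bundles $A,B$ and core $C$, any two decompositions differ by a unique element of $\Stat(\decom{\ts{D}})$ acting by post-composition, so each space of decompositions is a simple transitive $G_2$-space as soon as it is non-empty, and non-emptiness for $\ts{D^W}$ holds because the splitting condition is preserved by dualization and flip. The one thing to pin down is that $\Stat(\decom{\ts{D^W}})$ is canonically $G_2$: the decomposed \dvbt underlying $\ts{D^W}$ is the permuted product $E_{\sigma(1)}\bz E_{\sigma(2)}\bz E_{\sigma(0)}^{\star}$ with $\sigma=\pi(W)$, whose statomorphism group is $\Ga$ of a reordering of $E_0^*\otimes E_1^*\otimes E_2^*$, identified with $G_2$ by symmetry of the tensor product exactly as the three duals were identified above.

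For (ii), I first observe that $\vartheta_W$ is a bijection: both operations composing it --- applying the invertible functor $W$, and (when $\varepsilon_W=-1$) passing to the inverse morphism --- are bijective on the relevant hom-sets. For the equivariance I would apply $W$ to $\phi\circ P$, where $\phi\in G_2=\Stat(\decom{\ts{D}})$ and $P\in\Dec(\ts{D})$, and use functoriality, distinguishing the sign $\varepsilon_W$. If $\varepsilon_W=1$ then $W$ is covariant and $(\phi\circ P)^W=\phi^W\circ P^W$; if $\varepsilon_W=-1$ then $W$ is contravariant, $(\phi\circ P)^W=P^W\circ\phi^W$, and taking inverses restores the order. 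Either way one gets
$$\vartheta_W(\phi\circ P)=(\phi^W)^{\varepsilon_W}\circ\vartheta_W(P),$$
so the prescription $\theta_W(\phi):=(\phi^W)^{\varepsilon_W}$ satisfies the required identity.

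It remains, for (ii), to see that $\theta_W$ is an automorphism and is independent of the choices. That it respects composition follows once more from functoriality together with the fact that any functor preserves inverses of isomorphisms: the covariant case is immediate, and in the contravariant case the order-reversal of $W$ is undone by the order-reversal of inversion. It is bijective (with inverse coming from $W^{-1}$), hence an automorphism of $G_2$. Independence of $P_0$ is read off from the closed formula $\theta_W(\phi)=(\phi^W)^{\varepsilon_W}$, which does not mention $P_0$; equivalently $\vartheta_W(\phi\circ P_0)\circ\vartheta_W(P_0)^{-1}$ takes the same value for every base point. Independence of $\ts{D}$ holds because that formula involves only $W$ acting on statomorphisms of $\decom{\ts{D}}=\ABC$, a decomposed \dvbt determined by $A$, $B$, $C$ alone.

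For (iii) I would verify the two axioms of a left action on the set $G_2$. The identity functor gives $\theta_I=\id$. For the composite, write $\phi^{W_1W_2}=(\phi^{W_2})^{W_1}$ and $\varepsilon_{W_1W_2}=\varepsilon_{W_1}\varepsilon_{W_2}$; since $W_1$ preserves inverses, $\bigl((\phi^{W_2})^{\varepsilon_{W_2}}\bigr)^{W_1}=(\phi^{W_1W_2})^{\varepsilon_{W_2}}$, and applying $(\,\cdot\,)^{\varepsilon_{W_1}}$ gives $\theta_{W_1}\bigl(\theta_{W_2}(\phi)\bigr)=(\phi^{W_1W_2})^{\varepsilon_{W_1W_2}}=\theta_{W_1W_2}(\phi)$. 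The step I expect to require the most care is not any single computation but the bookkeeping hidden in ``identify without comment'': one must fix, once and for all, the canonical isomorphism of each group $\Stat(\decom{\ts{D^W}})$ with a single copy of $G_2$ and check that these identifications are compatible with the application of $W$, so that the formal torsor identities above descend to honest equalities in $G_2$ and not merely equalities up to a reordering of tensor factors.
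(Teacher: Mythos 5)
Your argument is correct and its first half coincides with the paper's: the paper also reduces to $W=X$, and derives $\theta_X(\phi)=(\phi^X)^{-1}$ from $\vartheta_X(\phi\circ P)=(\phi^X)^{-1}\circ(P^X)^{-1}$ by functoriality. Where you diverge is in how the remaining claims (automorphism, independence of $P_0$ and of $D$) are settled. You settle them abstractly, from the closed formula $\theta_W(\phi)=(\phi^W)^{\varepsilon_W}$ and the fact that functors preserve composition and inverses. The paper instead settles them by an explicit computation with the duality pairing: writing $\phi=\phi_\lambda$ and evaluating $\langle (P^X)^{-1}(\delta)\,\vert\,P(d)\rangle$ in the decomposed bundle, it shows $(\phi_\lambda)^X=\phi_\lambda$ under the standard identification of $\Stat(C^*\bz B\bz A^*)$ with $\Wii$, hence $\theta_X=-\id$ on $\Ga(A^*\otimes B^*\otimes C)$ --- from which everything in (ii) and (iii) is immediate. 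Each route buys something. Yours is cleaner and generalizes formally (indeed it is closer to how Theorem \ref{thm:submain3} is handled in the triple case), but it proves only existence and naturality of $\theta_W$, and it leaves unexamined exactly the point you flag at the end: that the identification of $\Stat(\decom{\ts{D^W}})$ with $G_2$ is compatible with applying $W$. The paper's pairing computation is precisely what discharges that bookkeeping (it is the only place the identification is actually checked), and it simultaneously produces the value $\theta_X=\theta_Y=-\id$, which is not part of the theorem's statement but is indispensable two paragraphs later for showing $\theta_{X^2}=\theta_{Y^2}=\theta_{(XY)^3}=\id$ and hence $K_3=1$, $\dg_2=S_3$. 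So your proof is a legitimate proof of the theorem as stated, but if you stop there you will still owe the explicit computation before you can use the theorem for anything.
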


\pf
(ii)  It is enough to prove the result for $W = X$, since the same proof will hold 
for $Y$, and then for any word $W$ in $X$ and $Y$.

Consider $P \mapsto (P^X)^{-1},\ \Dec (\ts{D}) \to \Dec (\ts{D^X})$. For
$\phi\in\Stat(A\bz B\bz C)$ we have, by functoriality, 
$$
\vartheta_X(\phi\circ P) = (\phi^X)^{-1}\circ (P^X)^{-1} = (\phi^X)^{-1}\circ \vartheta_X(P),
$$
so that $\theta_X(\phi) = (\phi^X)^{-1}$.

Let us write $\phi = \phi_{\lambda}$ for some $\lambda \co A \otimes B \to C$ as in \eqref{eq:philambda}.  
Similarly, let us write $\theta_X(\phi) = \phi_{\lambda'}$ for some $\lambda' \co C^* \otimes B \to A^*$.  
Notice that with our abuse of notation we can also write $\theta_W(\lambda) = \lambda'$.
Consider the following diagram:
\begin{equation*}
\xymatrix{D \ar[r]^{P} \ar[rd]_{\phi_{\lambda}\circ P} & \decom{D} \ar[d]^{\phi_{\lambda}} & \decom{D^X} \ar[r]^{P^X} \ar[d]_{\phi_{\lambda'}} & D^X  \\
& \decom{D} & \decom{D^X} \ar[ur]_{(\phi_{\lambda} \circ P)^X}   }
\end{equation*}

If $d \in D_b$ and $\delta \in D^X_b = (D \duer B)_b$, we can calculate the pairing of $d$ and $\delta$ 
in any decomposition.  That is:
\begin{equation*}
\langle \delta \vert d \rangle \; = \; 
\langle  (P^X)^{-1}(\delta)       \vert  P(d)  \rangle \; = \;
\langle  \phi_{\lambda'} \circ (P^X)^{-1}(\delta) \vert  (\phi_{\lambda} \circ P)(d) \rangle
\end{equation*}
Let us write $P(d) = (a,b,c) \in \ABC$ and $(P^X)^{-1}(\delta)= (\gamma,b,\alpha) \in C^* \bz B \bz A^*$.  Then:
$$
\langle (P^X)^{-1}(\delta) \vert P(d) \rangle = 
\langle (\gamma,b,\alpha) \vert (a,b,c) \rangle =  
\langle \alpha \vert a \rangle + \langle \gamma \vert c \rangle 
$$
and 
\begin{align*}
\langle \phi_{\lambda'} ((P^X)^{-1}(\delta)) \vert  \phi_{\lambda} (P(d)) \rangle  = &
\langle (\gamma',b,\alpha + \lambda(\gamma,b)) \vert (a,b,c+ \lambda(a,b))\rangle\\
 = & \langle \alpha \vert a \rangle + \langle \gamma \vert c \rangle +
\lambda(a,b,\gamma) + \lambda'(a,b,\gamma).
\end{align*}
In order for these to be equal we need $\lambda = - \lambda'$.  We have proved that $\theta_X$ is 
minus the identity on $G_2$, and that $(\phi)^X = \phi$. 
This completes the proof of (ii). 

\medskip

(iii) is clear now that we know the map $\theta$ is well defined.
\pfend

The action of $\W_2$ on $G_2$ is not faithful.  However, if we restrict it to the kernel 
of $\pi \co\W_2\to S_3$ then it becomes faithful, as the next theorem shows.

\begin{thm} \label{thm:main}
For $W \in \W_2$ with $\pi(W) = 1$, the following are equivalent: 
\begin{enumerate}
\item $\theta_W$ is the identity on $\Wii$. 
\item There exists a natural isomorphism between the identity functor and $W$.
\end{enumerate}
\end{thm}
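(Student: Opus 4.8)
The plan is to prove the two implications separately, exploiting the torsor structure established in Theorem~\ref{thm:submain}. Recall that $W$ is a word in $X$ and $Y$ with $\pi(W)=1$, so for every \dvbt $\ts{D}$ with side bundles $A$, $B$ and core $C$, the bundle $\ts{D^W}$ has the \emph{same} side and core bundles, and the candidate statomorphism is $s(D) := (P^W)^{-\eps_W}\circ P\co\ts{D}\to\ts{D^W}$ built from any decomposition $P\in\Dec(\ts D)$. The crux of both directions is the relation between $\theta_W$ being trivial and $s(D)$ being independent of the choice of $P$.

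First I would prove (i)$\Rightarrow$(ii). Assume $\theta_W=\id$ on $\Wii$. The first step is to check that $s(D)$ does not depend on $P$: if $P' = \phi\circ P$ for $\phi\in G_2$, then by Theorem~\ref{thm:submain}(ii) we have $\vartheta_W(P') = \theta_W(\phi)\circ\vartheta_W(P) = \phi\circ\vartheta_W(P)$, using $\theta_W=\id$. A short computation then shows $(\vartheta_W(P'))^{-1}\circ P' = (\vartheta_W(P))^{-1}\circ P$, so the composite $s(D)$ is the same for $P$ and $P'$; since $\Dec(\ts D)$ is a single $G_2$-torsor, $s(D)$ is canonical. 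The second step is naturality: given a statomorphism $\psi\co\ts{D_1}\to\ts{D_2}$ in $\C$, one must verify the square relating $s(D_1)$, $s(D_2)$, $\psi$ and $\psi^W$ commutes. Here I would use the freedom just gained --- compute $s(D_2)$ using the transported decomposition $\psi\cdot P_1$ (i.e. a decomposition of $D_2$ built functorially from one of $D_1$) --- so that functoriality of $W$ makes the square commute essentially formally. This defines the desired natural isomorphism $s\co\id\Rightarrow W$, each $s(D)$ being a statomorphism and hence invertible.

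For (ii)$\Rightarrow$(i), I would argue contrapositively or directly by extracting $\theta_W$ from a given natural isomorphism. Suppose $s\co\id\Rightarrow W$ is a natural isomorphism. Apply it to the single \dvbt $\ts{D}=\ABC$ and to the morphisms in $\C$ that are the statomorphisms $\phi_\lambda\in G_2=\Stat(\ABC)$. Naturality applied to $\phi_\lambda\co\ABC\to\ABC$ gives the square $s(D)\circ\phi_\lambda = \phi_\lambda^W\circ s(D)$, i.e. $\phi_\lambda^W = s(D)\circ\phi_\lambda\circ s(D)^{-1}$. Since $G_2$ is abelian and $s(D)\in G_2$ (it is a statomorphism of $\ABC$ to itself), conjugation is trivial, forcing $\phi_\lambda^W = \phi_\lambda$ for all $\lambda$. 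Interpreting $\phi_\lambda^W$ via $\vartheta_W$ and comparing with the definition of $\theta_W$ then yields $\theta_W(\lambda)=\lambda$ for all $\lambda\in\Wii$, which is exactly (i).

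The main obstacle I anticipate is the naturality verification in (i)$\Rightarrow$(ii): one must track carefully how $W$, which may be contravariant (when $\eps_W=-1$, $W\co\C\to\C^{op}$), acts on a morphism and its dual, and confirm that the transported decomposition $\psi\cdot P_1$ genuinely equals a decomposition obtained functorially so that $P_2^W$ and $P_1^W$ are intertwined by $\psi^W$. Getting the variance bookkeeping right --- and confirming that the independence-of-$P$ result lets us choose compatible decompositions on $D_1$ and $D_2$ simultaneously --- is where the real content lies; the algebra in (ii)$\Rightarrow$(i) is comparatively routine once one observes that $G_2$ is abelian.
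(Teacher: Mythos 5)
Your proposal is correct and follows essentially the same route as the paper: the forward direction constructs $s(D)=(P^W)^{-1}\circ P$, uses $\theta_W=\id$ for independence of $P$, and transports decompositions along a morphism for naturality; the converse extracts $\theta_W(\lambda)=\lambda$ from the naturality square at a statomorphism $\phi_\lambda$ of the decomposed bundle, where commutativity of $G_2$ kills the conjugation. The paper phrases the converse with two decompositions of a general $D$ rather than specializing to $\ABC$, but the mechanism is identical.
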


\pf
$\textrm{(i)} \implies \textrm{(ii)}$ For every \dvbt $D$ in $\mscr{C}$ we want to 
obtain a statomorphism $s(D) \co D \to D^W.$ Choose a decomposition $P \co D \to A\bz B\bz C$.  
We also have $P^W \co D^W \to A\bz B\bz C$.  Compose them to define $s(D):=(P^W)^{-1} \circ P$.

We check first that $s(D)$ defined in this way does not depend on the choice of $P$. 
If $P_1, P_2 \in \Dec(\ts{D})$, there is some $\lambda \in \Wii$ such that 
$P_2 = \phi_{\lambda} \circ P_1$.  We then have 
$(P_2)^W = \phi_{\theta_W(\lambda)} \circ (P_1)^W$.  
Since $\theta_W = \id_{G_2}$, it follows that 
$\phi_{\theta_W(\lambda)}=\phi_{\lambda}$, and the following diagram is commutative:
\begin{equation*}
\xymatrix{
D \ar[r]^(.3){P_1} \ar[rd]_{P_2} & \ABC \ar[d]^{\phi_{\lambda}} \ar@{}[r]^(.7){(P_1)^W} &
D^W \ar[l] \ar[ld] \\
& \ABC \ar@{}[ur]_{(P_2)^W}
}
\end{equation*}
which proves that $s(D)$ is well defined. 

We need to check that $s$ is a natural transformation.  Let $f\co D_1 \to D_2$ be a 
statomorphism of \dvbts. Choose a decomposition $P_2\co D_2 \to \ABC$ of $D_2$ and 
consider the decomposition $P_1 :=P_2 \circ f$ of $D_1$.
Then the following diagram is commutative:
\begin{equation*}
\xymatrix{
\ts{D_1} \ar[r]_(.3){P_1}  \ar@/^1pc/[rr]^{s(D_1)} \ar[d]_{f} &
\ts{\ABC} \ar@{}[r]_(.7){(P_1)^W} \ar[d]^{=} &
\ts{(D_1)^W} \ar[l] \ar[d]^{f^W} \\
\ts{D_2} \ar[r]^(.3){P_2} \ar@/_1pc/[rr]_{s(D_2)} &
\ts{\ABC} \ar@{}[r]^(.7){(P_2)^W} & 
\ts{(D_2)^W} \ar[l]
}
\end{equation*}

$\textrm{(ii)} \implies \textrm{(i)}$  Reciprocally, let $s$ be a natural 
transformation from the identity functor to $W$.  Let $\ts{D}$ be a \dvbt in $\mscr{C}$ 
and let $\decom{\ts{D}} = \decom{\ts{D^W}} = {\decom{\ts{D}}}^W$ be the 
decomposed \dvbt.  Let $P_1, P_2 \in \Dec(\ts{D})$ be two decompositions of $\ts{D}$ 
such that $P_2=\phi_{\lambda} \circ P_1$ for $\lambda \in \Wii$. Then the following 
diagram is commutative:
\begin{equation*}
\xymatrix{
\ts{D} \ar[r]_{P_1} \ar[d]_{=} \ar@/^1pc/[rrr]^{s(D)} &
\ts{\Bar{D}} \ar[r]_{s(\Bar{D})} \ar[d]_{\phi_{\lambda}} & 
\ts{\Bar{D}} \ar@{}[r]_{(P_1)^W} \ar[d]_{(\phi_{\lambda})^W} &
\ts{D} \ar[l] \ar[d]_{=} \\
\ts{D} \ar[r]_{P_2} & 
\ts{\Bar{D}} \ar[r]_{s(\Bar{D})} &
\ts{\Bar{D}} \ar@{}[r]_{(P_2)^W} &
\ts{D} \ar[l]
}
\end{equation*}
Hence $\phi_{\theta_W(\lambda)} =(\phi_{\lambda})^W = \phi_{\lambda}$
and $\theta_W(\lambda) = \lambda$.
\pfend

We can now give a precise definition of $\dg_2$ and calculate it. 

\begin{defin}
\label{defin:dg2}
The group $\dg_2$ is the group $\W_2$ of functors generated by $X$ and $Y$, quotiented over
natural isomorphism in $\mscr{C}$. 
\end{defin}

From (\ref{eq:XYf}) we have $\sigma_1: = \pi(X) = (01)$ and $\sigma_2 : = \pi(Y) = (02)$ 
and it follows that $\pi \co \dg_2\to S_3$ is surjective. Define $K_3$ to be the 
kernel of $\pi$. The following lemma is an easy exercise in algebra. 

\begin{lem} \label{lem:gener}
Let $f\co G \to S$ be a surjective group homomorphism.  Let $g_1,\ldots,g_n$ be a set of 
generators of $G$ and write $\sigma_i:=f(g_i)$.  Let 
$\{ R_j(\sigma_1,\ldots,\sigma_n) \; \vert \; j=1,\ldots,m \}$ be a set of relations for 
a presentation of $S$ with generators $\sigma_1,\ldots,\sigma_n$. 
Then the kernel of $f$ is the normal subgroup of $G$ generated by 
$\{ R_j(g_1,\ldots,g_n) \; \vert \; j=1,\ldots,m  \}$.
\end{lem}

A presentation of $S_3$ with generators $\sigma_1, \sigma_2$ is
\begin{equation*}
\langle  \sigma_1, \sigma_2 \; \vert \; \sigma_1^2,\, \sigma_2^2,\, (\sigma_1\sigma_2)^3 \rangle .
\end{equation*}
Hence, $K_3 = \ker \pi$ is the normal subgroup of $\dg_2$ generated by 
\begin{equation*}
\{ X^2,\, Y^2,\, (XY)^3 \} .
\end{equation*}

In the proof of Theorem \ref{thm:submain} we showed that 
$\theta_X = \theta_Y = - \id.$ From this we obtain
\begin{equation*}
\theta_{X^2} = \theta_{Y^2} = \theta_{(XY)^3} = \id. 
\end{equation*}
By Theorem \ref{thm:main} it follows that $X^2 = Y^2 = (XY)^3 = 1$ in $\dg_2$. 
Thus $K_3$ is the trivial group and $\dg_2 = S_3$, as in \cite{Mackenzie:2005dts}. 
In the following sections we will extend this method to determine the group $\dg_3$. 

We end the present section by demonstrating that, for any \dvbt $D$, a choice of 
statomorphism between $D^{XYX}$ and $D^f$ is equivalent to choosing a decomposition 
of $D$. 

First, let $P\co \ts{D} \Lra \ts{\ABC}$ be a decomposition. Applying the functors $XYX$ 
and $f$ we get statomorphisms
$$
P^{XYX}\co  B\bz A\bz C  \Lra \ts{D^{XYX}}\quad\text{and}\quad
P^{f}\co \ts{D^f} \Lra  B\bz A\bz C. 
$$
Hence $\Phi(S) := P^{XYX} \circ P^f \co \ts{D^f} \Lra \ts{D^{XYX}}$ is a statomorphism.  
The interesting result is that those are all the statomorphisms.

\begin{prop}
Let $\mathcal{S}$ denote the set of all statomorphisms $\ts{D^f} \Lra \ts{D^{XYX}}$. The map
$$
\Phi\co  \Dec(\ts{D}) \Lra \mathcal{S}
$$
defined above is a bijection.
\end{prop}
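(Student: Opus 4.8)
The plan is to exhibit both $\Dec(\ts{D})$ and $\mathcal{S}$ as torsors over the abelian group $G_2$ and to show that $\Phi$ intertwines the two torsor structures through a group \emph{automorphism} of $G_2$; any such map between nonempty $G_2$-torsors is automatically a bijection. First I would record the objects involved. Since $\pi(XYX)=\pi(f)=(12)$, both $\ts{D^{XYX}}$ and $\ts{D^f}$ are double vector bundles with side bundles $B$ and $A$ and core $C$, with the common decomposed bundle $\decom{\ts{D^{XYX}}}=\decom{\ts{D^f}}=B\bz A\bz C$; in particular a statomorphism $\ts{D^f}\to\ts{D^{XYX}}$ is a meaningful notion, and $\mathcal{S}\neq\emptyset$ because it contains $\Phi(P)$ for any $P\in\Dec(\ts{D})$ (nonempty by the splitting condition). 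The set $\mathcal{S}$ is a torsor under the group $\Stat(\ts{D^{XYX}})$ of statomorphisms of $\ts{D^{XYX}}$ to itself, acting on the left by composition: any two elements of $\mathcal{S}$ differ by such an automorphism, so $\mathcal{S}$ is a single orbit. Conjugation by a decomposition $P^{XYX}\co B\bz A\bz C\to\ts{D^{XYX}}$ identifies $\Stat(\ts{D^{XYX}})$ with $\Stat(B\bz A\bz C)=G_2$, and since $G_2$ is abelian this identification is independent of the chosen decomposition. Thus $\mathcal{S}$ is canonically a $G_2$-torsor, as is $\Dec(\ts{D})$ by Theorem~\ref{thm:submain}(i).

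Next I would compute how $\Phi$ transforms under the $G_2$-action $\phi\mapsto\phi\circ P$ on $\Dec(\ts{D})$. For $\phi\in G_2=\Stat(\ABC)$, functoriality of $f$ (covariant) and of $XYX$ (contravariant) gives $(\phi\circ P)^f=\phi^f\circ P^f$ and $(\phi\circ P)^{XYX}=P^{XYX}\circ\phi^{XYX}$, whence
\begin{align*}
\Phi(\phi\circ P)
&=P^{XYX}\circ\bigl(\phi^{XYX}\circ\phi^{f}\bigr)\circ P^{f}\\
&=\bigl(P^{XYX}\circ\alpha(\phi)\circ(P^{XYX})^{-1}\bigr)\circ\Phi(P),
\end{align*}
where $\alpha(\phi):=\phi^{XYX}\circ\phi^{f}\in\Stat(B\bz A\bz C)=G_2$. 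Because $G_2$ is abelian and both $\phi\mapsto\phi^{XYX}$ and $\phi\mapsto\phi^f$ are (anti)homomorphisms into it, $\alpha\co G_2\to G_2$ is a group homomorphism, and the displayed identity says precisely that $\Phi$ is $\alpha$-equivariant for the two left $G_2$-torsor structures. It then remains to show $\alpha$ is bijective: if so, as $\phi$ ranges over $G_2$ the element $P^{XYX}\circ\alpha(\phi)\circ(P^{XYX})^{-1}$ ranges over all of $\Stat(\ts{D^{XYX}})$, so $\{\Phi(\phi\circ P_0):\phi\in G_2\}$ is the full orbit of $\Phi(P_0)$, namely all of $\mathcal{S}$ (surjectivity), while freeness of the action together with injectivity of $\alpha$ forces $\Phi(P_1)=\Phi(P_2)\Rightarrow P_1=P_2$ (injectivity).

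Finally I would evaluate $\alpha$ under the standing identification of the various copies of $G_2$ with $\Ga(A^*\otimes B^*\otimes C)$. The flip acts as the identity there, $\phi^f=\phi$, since by \eqref{eq:philambda} $\phi_\lambda^f$ is the statomorphism of $B\bz A\bz C$ carrying the same tensor $\lambda$. For $XYX$, Theorem~\ref{thm:submain}(iii) makes $\theta$ an action, so $\theta_{XYX}=\theta_X\theta_Y\theta_X=(-\id)^3=-\id$; and since $XYX$ is contravariant, the same relation as for $X$ in the proof of Theorem~\ref{thm:submain} gives $\theta_{XYX}(\phi)=(\phi^{XYX})^{-1}$, hence $\phi^{XYX}=\theta_{XYX}(\phi)^{-1}=\phi$. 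Therefore $\alpha(\phi)=\phi^{XYX}\circ\phi^{f}=\phi\circ\phi=\phi^2$, the doubling map on the abelian group $G_2\cong\Ga(A^*\otimes B^*\otimes C)$; as this is a real vector space it is uniquely $2$-divisible, so doubling is a bijection, $\alpha$ is an automorphism, and $\Phi$ is a bijection. The one genuinely delicate point is the sign bookkeeping in this last step: it is essential that $\theta_{XYX}=-\id$ (an \emph{odd} power of $\theta_X=-\id$), yielding $\phi^{XYX}=+\phi$ and $\alpha=$ doubling; had the sign come out as $+\id$ one would instead find $\alpha(\phi)=\phi^{-1}\circ\phi=\id$, far from bijective. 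This is exactly the place where $XYX$ and $f$, which agree on morphisms, fail to be naturally isomorphic, and it is what makes the torsor $\mathcal{S}$ nontrivial.
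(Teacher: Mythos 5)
Your proof is correct and follows essentially the same route as the paper's: both reduce the claim to the computation that changing the decomposition by $\phi_\lambda$ changes $\Phi(P)$ by $(\phi_\lambda)^{XYX}\circ(\phi_\lambda)^f=\Tilde{\phi}_{2\lambda}$, and then use that doubling is a bijection on the real vector space $\Ga(A^*\otimes B^*\otimes C)$; your torsor-equivariance framing is just a repackaging of the paper's explicit choice of a base decomposition $P_1$. Your sign check $\theta_{XYX}=-\id\Rightarrow\phi^{XYX}=\phi$ is exactly the fact the paper quotes from the proof of Theorem~\ref{thm:submain}(ii).
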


\begin{proof}
Let $h\co \ts{D^f} \Lra \ts{D^{XYX}}$ be a statomorphism.  We want to prove that there is 
a unique $P \in \Dec(\ts{D})$ such that $\Phi(P) = h$.  First, 
using notation similar to that in the proof of Theorem \ref{thm:submain}, we note that 
the groups of statomorphisms of the decomposed \dvbts $\ts{\ABC}$ and 
$B \bz A \bz C$ are canonically isomorphic to $G_2$. 
\begin{equation*}
\begin{split}
\Tilde{\phi}: \; \Wii \Lra & \Stat(B \bz A \bz C) \\
\lambda \; \; \mapsto & \; \Tilde{\phi}_{\lambda}: B \bz A \bz C \Lra B \bz A \bz C \\
            & \quad \quad (b,a,c) \mapsto (b,a,c + \lambda(a,b))
\end{split}
\end{equation*}
We also know from the proof of Theorem \ref{thm:submain}(ii) that 
$(\phi_{\lambda})^f = (\phi_{\lambda})^{XYX}=\Tilde{\phi}_{\lambda}$. 

Let us pick a decomposition $P_1\in \Dec(\ts{D})$. Then 
$((P_1)^f)^{-1} \circ h \circ ((P_1)^{XYX})^{-1}$ is an automorphism of 
$B \bz A \bz C$, and hence equals $\Tilde{\phi}_{\mu}$ for some $\mu \in \Wii$.
\begin{equation*}
\xymatrix{
\ts{D}^f \ar[r]^{h} \ar[d]_{(P_1)^f} & \ts{D}^{XYX} \ar@{}[d]^{(P_1)^{XYX}} \\
\ts{B \bz A \bz C} \ar[r]_{\Tilde{\phi}_{\mu}} & \ts{B \bz A \bz C} \ar[u] 
}
\end{equation*}
We know that every decomposition of $\ts{D}$ is of the form $\phi_{\lambda} \circ P_1$ for a 
unique $\lambda \in \Wii$.  So we want to prove that there is a unique $\lambda \in \Wii$ 
such that $h = \Phi(\phi_{\lambda} \circ P_1)$.  Noting that
$$
h  = (P_1)^{XYX} \circ \Tilde{\phi}_{\mu} \circ (P_1)^f 
$$
we have that
\begin{multline*}
\Phi(\phi_{\lambda} \circ P_1) = 
(\phi_{\lambda} \circ S_1)^{XYX} \circ (\phi_{\lambda} \circ P_1)^f = 
(P_1)^{XYX} \circ (\phi_{\lambda})^{XYX} \circ (\phi_{\lambda})^f \circ (P_1)^f  \\
= (P_1)^{XYX} \circ \Tilde{\phi}_{\lambda} \circ \Tilde{\phi}_{\lambda} \circ (P_1)^f = 
(P_1)^{XYX} \circ \Tilde{\phi}_{2 \lambda} \circ (P_1)^f
\end{multline*}
and so $h = \Phi(\phi_{\lambda} \circ P_1)$ if and only if $2 \lambda = \mu.$
This completes the proof.
\end{proof}

To summarize: there is a canonical isomorphism between $D^{XYX}$ and $D^f$, but it is not 
a statomorphism;  there are statomorphisms between $D^{XYX}$ and $D^f$, but not canonical 
ones, as choosing one such statomorphism is equivalent to choosing a decomposition; the 
functors $XYX$ and $f$ are not naturally isomorphic through statomorphisms.  Because of 
this, we regard $XYX$ and $f$ as distinct  functors.

\begin{ej}
\label{ej:conns}
It is worthwhile to consider the significance of decompositions for the tangent
prolongation of an ordinary vector bundle. 

Let $\vb{A}{M}{q}$ be a vector bundle. Applying the tangent functor, we obtain a \dvbt:
\begin{equation*}
\dvb{TA}{A}{TM}{M}{\pi_A}{Tq}{q}{\pi_M}
\end{equation*}
with side bundles  $A$, and $TM$. The core consists of those $X\in TA$
which are annulled by both $T(q)$ and $\pi_A$; that is, $C$ is the set of vertical
vectors along the zero section, and may be identified with $A$. 

A splitting of $TA$ is a connection in $A$ in the usual sense; see, for example, 
\cite{Besse}. Given a splitting 
\begin{equation*}
\Sigma\co TM\times_M A\to TA,
\end{equation*}
and a vector field $x$ on $M$, define a vector field $\Tilde{x}$ on $A$ by
$$
\Tilde{x}(e) = \Sigma(x(q(e)),e).
$$
It follows from the properties of $\Sigma$  that $x\mapsto\Tilde{x}$ 
is a connection in $A$. Conversely a connection in $A$ induces a decomposition, 
and $\Dec(TA)$ can be identified with the space of connections in $A$. 

As is well known, connections in $A$ form an affine space with model space 
$\Gamma(T^* M \otimes\End(A))$. This includes the $G_2$-torsor structure. 

The dual over $TM$, namely $(TA)^X$, can be identified with the tangent
double vector bundle $T(A^*)$ \cite[9.3.2]{Mackenzie:GT} and so 
Theorem \ref{thm:submain}(ii) includes the correspondence between connections
in $A$ and connections in $A^*$. 
\end{ej}


\section{Triple vector bundles} 
\label{sect:tvb}

We begin by recalling the definition of a triple vector bundle. See
\cite{Mackenzie:2005dts} for fuller details. We use a notation that extends readily 
to the $n$-fold case. 

\begin{figure}[h]
\setlength{\unitlength}{1cm}
\begin{picture}(15,5)(-0.5,-0.5)
\put(0,4)
{\smallcube{E_{1,2,3}}{E_{2,3}}{E_{1,3}}{E_3}{E_{1,2}}{E_2}{E_1}{M}}
\put(7,3.6){\xymatrix@=2mm{
&& E_{3,12}\ar[dddd] &&\\
& E_{2,31}\ar[rrdd] &&&\\
E_{1,23}\ar[rrrr] &&&& E_{23}\\
&&& E_{13} &\\
&& E_{12} &&\\
}}
\end{picture}
\caption{\ \label{fig:tvb}}
\end{figure}

\begin{defin}
A \emph{triple vector bundle} $E$ is a system as on the left of Figure~\ref{fig:tvb}
where each arrow represents a vector bundle structure, such that each face
is a double vector bundle, and which satisfies the splitting condition stated
below. 

A \emph{morphism of triple vector bundles} $\phi\co E\to F$ is a system of maps
$\phi_I\co E_I\to F_I$, for all subsets $I$ of $\{1,2,3\}$, such that for all
nonempty subsets $I$ and each $k\in I$, $\phi_I$ is a morphism of vector
bundles over $\phi_{I\backslash\{k\}}$. 
\end{defin}

We always read figures as in Figure~\ref{fig:tvb} with 
$(E_{1,2,3};E_{1,2}, E_{2,3};E_2)$ at the rear and $(E_{1,3};E_1,E_3;M)$ coming 
out of the page toward the reader. Notice that when a nonempty subscript has $k$ 
commas, $0\leq k\leq 2$, the space which it denotes has $k+1$ vector bundle 
structures. We sometimes denote $M$ by $E_\emptyset$ for uniformity. 

The three structures of double vector bundle on $E_{1,2,3}$ are the \emph{upper} 
double vector bundles, and $E_{1,2}, E_{2,3}, E_{3,1}$ are the \emph{lower} double 
vector bundles. We refer to $(E_{1,2};E_1, E_2;M)$ as the \emph{floor} of
$E$ and to $(E_{1,2,3};E_{1,3}, E_{2,3};E_3)$ as the \emph{roof} of $E$, 
with \emph{left, right, front} and \emph{back} having their usual meanings. 

The cores of the lower double vector bundles are denoted 
$E_{12},\, E_{23},\, E_{31},$ and the cores of the upper double 
vector bundles are denoted $E_{3,12},\,E_{1,23},\,E_{2,31}.$
The latter are vector bundles over the former, as indicated on the 
right of Figure~\ref{fig:tvb}, and  form three \emph{core double vector 
bundles}, $(E_{1,23}; E_1,E_{23};M)$, $(E_{2,31}; E_2,E_{31};M)$, and
$(E_{3,12}; E_3,E_{12};M)$, Each of the three core double vector bundles 
has the same core, denoted $E_{123}$ and called the \emph{ultracore}. 
The seven vector bundles $E_1,\,E_2,\,E_3,\,E_{12},\,E_{23}$, $E_{31},\,E_{123}$ 
are called the \emph{building bundles of} $E.$ We denote them collectively
by $E_\sol$.

\begin{ej}
The tangent prolongation of a double vector bundle $(D;A,B;M)$ is the triple
vector bundle $TD$ shown in Figure~\ref{fig:tvbexs}(a). The three core double
vector bundles are $(D;A,B;M)$, $(D;A,B;M)$ and $(TC;C.TM;M)$, where $C$ is the
core of $D$. The ultracore is $C$. 

\begin{figure}[h]
\begin{center}
\subfloat[]
{\smallcube{TD}{TB}{TA}{TM}{D}{B}{A}{M}}
\qquad
\subfloat[]
{\xymatrix@=5mm{
                             & & E_{2,3}\ar'[d][dd] \ar[dr] &\\
& E_{1,3}   \ar[rr] \ar[dd]    & & E_3 \ar[dd]  \\
E_{1,2}\ar'[r][rr] \ar[dr]  &       & E_2 \ar[dr] &\\
& E_1  \ar[rr] & & M\\
}}
\caption{\ \label{fig:tvbexs}}
\end{center}
\end{figure}

Suppose given three double vector bundles arranged as in Figure~\ref{fig:tvbexs}(b)
and a vector bundle $E_{123}$ on $M$. 
Define the manifold $E'_{1,2,3}$ to be the pullback of the diagram; that is, $E'_{1,2,3}$ consists of
triples $(e_{1,2},\,e_{1,3},\,e_{2,3})\in E_{1,2}\times E_{1,3}\times E_{2,3}$ such that
$$
q^{1,2}_1(e_{1,2}) = q_1^{1,3}(e_{1,3}),\quad
q^{1,2}_2(e_{1,2}) = q_2^{2,3}(e_{2,3}),\quad
q^{1,3}_1(e_{1,3}) = q_3^{1,3}(e_{1,3}),
$$
using an obvious notation for the projections $q_k^{i,j}$. Finally, define
$E_{1,2,3}$ to be the manifold of tuples 
$(e_{1,2},\,e_{1,3},\,e_{2,3},\,e_{123})\in E'_{1,2,3}\times E_{123}$ such that 
$(e_{1,2},\,e_{1,3},\,e_{2,3})$ projects to the same point of $M$ as does $e_{123}$. 
Define a vector bundle structure on $E_{1,2,3}$ with base $E_{1,2}$ by
\begin{multline*}
(e_{1,2},\,e_{1,3},\,e_{2,3},\,e_{123}) \add{1,2} (e_{1,2},\,e'_{1,3},\,e'_{2,3},\,e'_{123})\\ 
= (e_{1,2},\,e_{1,3}\add{1}e'_{1,3},\,e_{2,3}\add{2}e'_{2,3},\,e_{123}+e'_{123}), 
\end{multline*}
and scalar multiplication analogously. With the similar structures over $E_{1,3}$ and
$E_{2,3}$, this makes $E_{1,2,3}$ a triple vector bundle with ultracore $E_{123}.$ 
Thus any diagram of the form Figure~\ref{fig:tvbexs}(b) can be completed to a triple
vector bundle with an arbitrary ultracore. 

Given an indexed set of vector bundles 
$E_\sol = \{E_1,\,E_2,\,E_3,\,E_{12},\,E_{23},\,E_{31},\,E_{123}\}$ over base $M$, performing
this construction with the decomposed double vector bundles $E_{i,j} = E_i\bz E_j\bz E_{ij}$
yields the \emph{decomposed triple vector bundle} $\Bar{\BE}$ with the $E_\sol$ as building 
bundles.  (Note that the bar here means something slightly different from the notation 
$\decom{D}$ for a double vector bundle $D$.  In both cases the bar means that we build a 
decomposed $n$-fold vector bundle, but in one case we start with a general $n$-fold vector 
bundle, while in the other we start with a set of building bundles.)
\end{ej}

\begin{defin}
\label{df:stato3}
Let $E$ and $F$ be triple vector bundles with the same building bundles
$E_\bullet$. A \emph{statomorphism} from $E$ to $F$ is an isomorphism
$\phi\co E\to F$ which induces the identity on each of the building bundles. 

A triple vector bundle $E$ satisfies the \emph{splitting condition} if there is a
statomorphism of triple vector bundles $\phi\co E\to \Bar{\BE}$; 
such a map is a \emph{splitting of} $E$. 
\end{defin}

\begin{prop}
If a double vector bundle $(D;A,B;M)$ satisfies the splitting condition, then its
tangent prolongation $TD$ does also.   
\end{prop}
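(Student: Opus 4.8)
The plan is to reduce to the decomposed case by applying the tangent functor to a decomposition of $D$, and then to split the resulting triple vector bundle using ordinary linear connections.

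First I would use the splitting condition for $D$ to choose a decomposition $P\co D\to\Bar{D}=A\bz B\bz C$, that is, a statomorphism of \dvbts inducing the identity on $A$, $B$ and $C$. Applying the tangent functor gives an isomorphism of triple vector bundles $TP\co TD\to T(A\bz B\bz C)$. I would then check that $TP$ is a \emph{statomorphism} of triple vector bundles. From the Example following Figure~\ref{fig:tvbexs} the seven building bundles of $TD$ are $A,B,TM,C,B,A,C$ (the side bundles $A$, $B$, $TM$; the lower cores $E_{12}=C$, $E_{23}=B$, $E_{31}=A$; and the ultracore $E_{123}=C$), and these coincide with the building bundles of $T(A\bz B\bz C)$. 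Since $P$ induces the identity on $A$, $B$, $C$ and covers $\id_M$, functoriality together with $T(\id)=\id$ forces $TP$ to induce the identity on each of the seven bundles. Hence it suffices to prove that $T(A\bz B\bz C)$ satisfies the splitting condition.

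Next, since $M$ is paracompact, I would choose linear connections $\nabla^A$, $\nabla^B$, $\nabla^C$ in the vector bundles $A$, $B$, $C$ over $M$. By Example~\ref{ej:conns} a connection in a vector bundle $V$ is the same datum as a decomposition of its tangent prolongation, i.e.\ a statomorphism of \dvbts $TV\to V\bz TM\bz V$. As a manifold $A\bz B\bz C$ is the fibered product $A\times_M B\times_M C$, and the tangent functor preserves such transverse fibered products, so $T(A\bz B\bz C)\cong TA\times_{TM}TB\times_{TM}TC$. Feeding in the three decompositions of $TA$, $TB$, $TC$ should produce a bijection onto the decomposed triple vector bundle $\Bar{(TD)_\sol}$: writing a point as $(\dot a,\dot b,\dot c)$ over a common $\dot m\in TM$ and splitting each factor as $\dot a\mapsto(a,\dot m,a'')$, $\dot b\mapsto(b,\dot m,b'')$, $\dot c\mapsto(c,\dot m,c'')$, the seven pieces $a,b,\dot m,c,b'',a'',c''$ should correspond respectively to the side bundles $E_1=A$, $E_2=B$, $E_3=TM$, the lower cores $E_{12}=C$, $E_{23}=B$, $E_{31}=A$, and the ultracore $E_{123}=C$.

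The hard part will be verifying that this assembled bijection is genuinely a morphism of triple vector bundles which is a statomorphism, and not merely a diffeomorphism. Each connection-splitting is linear only for one of the three tangent-direction structures, so I must check that the three are mutually compatible across the fibered product over $TM$ — that the map respects all three upper vector bundle structures simultaneously and restricts to the identity on each of the seven building bundles, including the delicate identifications of the two tangent cores $E_{23}$, $E_{31}$ and of the ultracore $E_{123}$. Granting this, composing with $TP$ yields a statomorphism $TD\to\Bar{(TD)_\sol}$, which is by Definition~\ref{df:stato3} the required splitting of $TD$. (Alternatively one could argue locally over coordinate charts of $M$, where $TD$ is manifestly decomposed, and glue by the same \v{C}ech argument used in the double case; the connection construction has the advantage of producing a global splitting directly.)
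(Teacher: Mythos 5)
Your proposal is correct and takes essentially the same approach as the paper: apply the tangent functor to a decomposition of $D$ to identify $TD$ with $TA\times_{TM}TB\times_{TM}TC$, then use connections in $A$, $B$, $C$ (that is, decompositions of their tangent prolongations) to reach the decomposed triple vector bundle with building bundles $E_1=A$, $E_2=B$, $E_3=TM$, $E_{12}=C$, $E_{13}=A$, $E_{23}=B$, $E_{123}=C$. The compatibility verification you flag as ``the hard part'' is likewise left implicit in the paper's own proof.
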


\pf
Taking the tangent of a decomposition of $D$ we have a diffeomorphism 
$TD\to TA\times_{TM} TB\times_{TM} TC$. Using decompositions of $TA, TB$ and $TC$, this
gives a diffeomorphism 
$$
TD\cong (A\bz A\bz TM)\times_{TM} (B\bz B\bz TM) \times_{TM} (C\bz C\bz TM) 
\cong A\bz A\bz B\bz B\bz C\bz C\bz TM,
$$
where $\bz$ denotes pullback over $M$. Setting $E_1 = A,\ E_2 = B,\ E_3 = TM,\ E_{12} = C,$
$E_{13} = A,\,E_{23} = B,\, E_{123} = C$ we have a decomposition of $TD$. 
\pfend

The duals of a triple vector bundle are set out in \cite[\S7]{Mackenzie:2005dts}; see 
Figure~\ref{fig:tvbXYZ}. Here we need a different notation to use for the duals of decomposed
triple vector bundles. For any subset $I\subseteq\{0,1,2,3\}$, for the duals of building
bundles, write
$$
E_I^* = E_{I^c},
$$
where $I^c$ us the complement of $I$. 
In particular $E_0 = E_{123}^*$ is the dual of the ultracore. 
The effect of dualization on the building bundles is shown in Table~\ref{table:tvbXYZ}. In 
this table the middle column shows the duals of the ultracores, rather than the ultracores themselves. 

\begin{figure}[h]
\setlength{\unitlength}{10mm}
\begin{picture}(15,6)(1,0)
\put(0,5){\xymatrix@!=1pc{
\save[]+<-1.5cm,0.1cm>*\txt<8pc>{$E^X = {}$} \restore
{E_{1,2,3}\duer E_{2,3}} \ar[rr] \ar[dd] \ar[dr] & & E_{2,3} \ar'[d][dd] \ar[dr] &\\
& E_{3,12}\duer E_3 \ar[rr] \ar[dd]         & & E_3 \ar[dd]  \\
E_{2,31}\duer E_2  \ar'[r][rr] \ar[dr]         & & E_2 \ar[dr] &\\
& E_0  \ar[rr] & & M\\
}}
\put(8.5,5){\xymatrix@!=0.4pc{
&& E_{1,3}\duer E_3\ar[dddd] &&\\
& E_{1,2}\duer E_2\ar[rrdd] &&&\\
E_{1,23}\duer E_{23}\ar[rrrr] &&&& E_{23}\\
&&& E_{03} &\\
&& E_{02} &&\\
}}
\end{picture}
\caption{The $X$ dual of a general triple vector bundle.\label{fig:tvbXYZ}}
\end{figure}
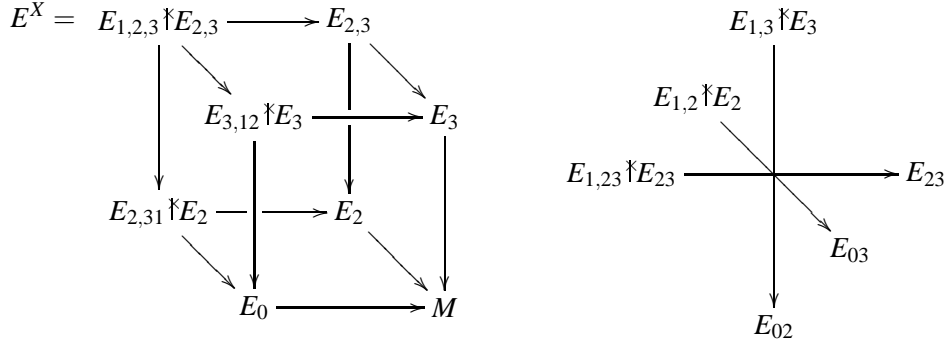

Define $\dg_3$ to be the group of functors generated by $X, Y, Z$ up to natural 
isomorphism, that is, two functors are considered the same if there is
a natural isomorphism between them through statomorphisms. (We will normally 
denote elements of $\dg_3$ by representatives.) As in the double case, 
there is a group homomorphism $\pi \co\dg_3\to S_4$, defined by the action of 
$X$, $Y$ and $Z$ on $E_1$, $E_2$, $E_3$ and $E_0$. We have
$$
\pi(X) = (01),\quad
\pi(Y) = (02),\quad
\pi(Z) = (03). 
$$

\newcolumntype{Z}{>{$}c<{$}}
\newcolumntype{L}{>{$}l<{$}}
\newcolumntype{R}{>{$}r<{$}}

\begin{table}[h] 
\begin{center}
\begin{tabular}{||L||R|R|R|R|R|R|R||}
\hline\hline
&E_1&E_2&E_3&E_0&E_{12}&E_{23}&E_{31}\\
\hline\hline
X &E_0&E_2&E_3&E_1&E_{02}&E_{23}&E_{03}\\\hline
Y &E_1&E_0&E_3&E_2&E_{01}&E_{03}&E_{31}\\\hline
Z &E_1&E_2&E_0&E_3&E_{12}&E_{02}&E_{01}\\\hline
\hline
\end{tabular}
\end{center}
\caption{The action on the building bundles, with the ultracores
replaced by their duals. \label{table:tvbXYZ}}
\end{table}

Write $\sigma_1, \sigma_2, \sigma_3$ for the images of $X$, $Y$ and $Z$. These generate 
$S_4$ and so $\pi$ is surjective. Write $K_4$ for the kernel of $\pi$. 
A presentation for $S_4$ in terms of these generators is 
\cite{CoxeterM:1965}: 
$$
\langle \sigma_1,\,\sigma_2,\,\sigma_3 \ |\ \sigma_i^2,\ (\sigma_i\sigma_j)^3,\ 
(\sigma_i\sigma_{j}\sigma_i\sigma_k)^2\rangle,
$$
where $i$, $j$, and $k$ are distinct.
Hence, by Lemma \ref{lem:gener}, $K_4$ is the normal subgroup generated by 
the 
elements in the list  
\begin{equation}
\label{eq:k4gens}
X^2,\,Y^2,\,Z^2,\,(XY)^3,\,(YZ)^3,\,(ZX)^3,\, (XYXZ)^2,\,(YZYX)^2,\,(ZXZY)^2.
\end{equation}

A priori, and according to our presentation, the list should contain further elements, but 
they are not needed since $XYX = YXY$ implies $(XYXZ)^2 = (YXYZ)^2$ and similar conditions.

From Table~\ref{table:tvbXYZ} it is clear that the action of $X,Y,Z$ on the set of
building bundles is determined by the action on the set $\{0,1,2,3\}$ of indices, and so acts
as $S_4$.   In particular, $W$ acts trivially on the set of building bundles if and only if 
$\pi(W)=1$.   As a consequence, since two triple vector bundles are statomorphic if and only 
if they have the same building bundles, we conclude that $W \in K_4$ if and only if $E$ and 
$E^W$ are statomorphic for every triple vector bundle $E$ (albeit not canonically, unless 
$W=1$). 

\begin{ej}
\label{ej:its}
For the tangent prolongation of a double vector bundle $(D;A,B;M)$ with core $C$, as shown in 
Figure~\ref{fig:tvbexs}(a), two duals are shown in Figure~\ref{fig:dtp}. The first is the cotangent 
double of $D$ and is described in detail in \cite{Mackenzie:2005dts}. In (b), $T_A^\sol D\to TA$ 
denotes the dual of the tangent prolongation bundle $TD\to TA$ and likewise $T^\sol C\to TM$ is the 
dual of $TC\to TM$. 

The duality between $C$ and $C^*$ induces a duality between $TC\to TM$ and $T(C^*)\to TM$ and
induces an isomorphism $T(C^*)\to T^\sol C$ \cite{MackenzieX:1994,Mackenzie:GT}. Similarly there 
is an isomorphism $T(D\duer A)\to T_A^\sol D$. These combine into an isomorphism from the triple
vector bundle in (b) to the tangent prolongation of $(D\duer A;A,C^*;M)$. 

In the case where $D$ is the double tangent bundle $T^2M$, the $X, Y$ and $Z$ duals of $T^3M$
may be canonically identified with $T^*(T^2M)$, $T(T^*TM)$ and $T^2(T^*M)$. There are also
canonical isomorphisms from the first of these to the second and third, and to the three
triple vector bundles $T^*(T^*TM)$, $T^*(TT^*M)$ and $T(T^*T^*M)$. These isomorphisms are of a
different character to those treated in this paper and will be studied elsewhere. 

\begin{figure}[h]
\begin{center}
\subfloat[]
{\smallcube{T^*D}{D\duer B}{D\duer A}{C^*}{D}{B}{A}{M}}
\qquad
\subfloat[]
{\smallcube{T_A^\sol D}{T^\sol C}{TA}{TM}{D\duer A}{C^*}{A}{M}}
\caption{See Example \ref{ej:its}. \label{fig:dtp}}
\end{center}
\end{figure}

\end{ej}


\section{Decompositions of triple vector bundles} 
\label{sect:dtvb}

We know from \S\ref{sect:intro} that the first six elements listed in 
\eqref{eq:k4gens} are equal to the identity. 
In addition, the group generated by the last three elements is normal in $\dg_3$.  This 
can be proven by a direct calculation: using the fact that the first six elements are equal 
to the identity, it follows that: 
\begin{equation}  \label{eq:actiononK4}
\begin{split}
X(XYXZ)^{2}X^{-1} = (ZXZY)^{-2}, \\ 
Y(XYXZ)^2Y^{-1}=(YZYX)^{-2},  \\
 Z(XYXZ)^2Z^{-1}=(XYXZ)^{-2}. 
\end{split}
\end{equation}
Hence, $K_4$ is the subgroup generated by the last three elements in \eqref{eq:k4gens}.  
To calculate it we use a triple analogue 
of \ref{thm:main}.  

We start by determining the statomorphisms from a decomposed triple vector bundle to
itself. Denote the set of decompositions of the triple vector bundle $E$ by $\Dec(E)$.  

\begin{prop}
\label{prop:statos}
Each statomorphism $\phi$ from $\Bar{\BE}$ to itself is of the form  
\begin{multline}  \label{eq:e123}
\phi(e_1,e_2,e_3,e_{12},e_{13},e_{23},e_{123})
= (e_1,\,e_2,\,e_3,\\
e_{12}+\gamma(e_1,\, e_2),\,e_{13}+\beta(e_1,\, e_3),\,
e_{23}+\alpha(e_2,\, e_3),\\
e_{123}+\nu(e_3,\, e_{12})+\lambda(e_1,\, e_{23})+\mu(e_2,\, e_{13})+\rho(e_1,\, e_2,\, e_3))
\end{multline}
where 
\begin{gather*}
\gamma\co E_1\otimes E_2\to E_{12},\quad 
\beta\co E_1\otimes E_3\to E_{13}, \quad
\alpha\co E_2\otimes E_3\to E_{23}, \\
\lambda\co E_1\otimes E_{23}\to E_{123},\quad
\mu\co E_2\otimes E_{13}\to E_{123},\quad
\nu\co E_3\otimes E_{12}\to E_{123},
\end{gather*}
and $\rho\co E_1\otimes E_2\otimes E_3\to E_{123}$ are linear maps. 
\end{prop}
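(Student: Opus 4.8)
The plan is to reduce the whole statement to fibrewise linear algebra and then to a short combinatorial enumeration. Since a statomorphism induces the identity on every building bundle, and in particular on $E_\emptyset = M$, the map $\phi$ covers $\id_M$; hence it suffices to prove the assertion in each fibre over $m\in M$, i.e.\ for decomposed triple vector \emph{spaces}, the smooth dependence on $m$ turning the resulting $\gamma,\beta,\alpha,\lambda,\mu,\nu,\rho$ into smooth bundle maps. Fixing such a fibre, I would first pin down the six lower components: $\phi$ covers statomorphisms $\phi_{1,2},\phi_{1,3},\phi_{2,3}$ of the three lower double vector bundles, because it preserves the relevant side and core building bundles. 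By the double-bundle normal form \eqref{eq:philambda}, $\phi_{1,2}(e_1,e_2,e_{12}) = (e_1,e_2,e_{12}+\gamma(e_1,e_2))$ and likewise for the other two faces, with bilinear $\gamma,\beta,\alpha$. Since the projections $\Bar{\BE}\to E_{i,j}$ read off $(e_i,e_j,e_{ij})$ and $\phi$ covers $\phi_{i,j}$, the first six components of $\phi$ are exactly as displayed in \eqref{eq:e123}; only the ultracore component, which I will call $F$, remains to be determined.

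The heart of the argument is that $F$ is forced to be multilinear of a very restricted type. For each of the three vector bundle structures on $E_{1,2,3}$, the map $\phi$ is a morphism of vector bundles covering the corresponding $\phi_{i,j}$. Reading off the addition formula from the construction of $\Bar{\BE}$ (see Figure~\ref{fig:tvbexs}), the fibre coordinates of these three structures are the groups $G_{2,3} = (e_1,e_{12},e_{13},e_{123})$, $G_{1,3} = (e_2,e_{12},e_{23},e_{123})$ and $G_{1,2} = (e_3,e_{13},e_{23},e_{123})$, the complementary triples being the respective bases. Fibrewise linearity, together with the fact that the zero of each fibre maps to the zero of its image fibre (note that the base shifts in $e_{ij}$ do not affect the $e_{123}$-coordinate used on source and target fibres), forces $F$ to be homogeneous of degree one and zero-preserving in each of $G_{2,3},G_{1,3},G_{1,2}$ separately. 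Since every one of the seven coordinates lies in at least one of these groups, $F$ is affine in each variable separately, hence a genuine multiaffine polynomial, and each of its monomials must have degree exactly one in each of the three groups.

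It then remains to enumerate the monomials $e_1^{a_1}e_2^{a_2}e_3^{a_3}e_{12}^{b_{12}}e_{13}^{b_{13}}e_{23}^{b_{23}}e_{123}^{c}$ satisfying the three degree equations $a_1+b_{12}+b_{13}+c = a_2+b_{12}+b_{23}+c = a_3+b_{13}+b_{23}+c = 1$. A one-line case analysis on $c\in\{0,1\}$ and on how many of $b_{12},b_{13},b_{23}$ are nonzero (two of them cannot be, as each pair occurs together in some equation) shows that the only solutions are $e_{123}$, $e_1e_{23}$, $e_2e_{13}$, $e_3e_{12}$ and $e_1e_2e_3$. This yields $F = D(e_{123})+\lambda(e_1,e_{23})+\mu(e_2,e_{13})+\nu(e_3,e_{12})+\rho(e_1,e_2,e_3)$ with $\lambda,\mu,\nu$ bilinear, $\rho$ trilinear, and $D\co E_{123}\to E_{123}$ linear. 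Finally, restricting to the ultracore, where $e_1=\cdots=e_{23}=0$, makes every monomial except $e_{123}$ vanish, so the hypothesis that $\phi$ is the identity on the ultracore forces $D = \id$, giving precisely \eqref{eq:e123}.

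I expect the main obstacle to be the bookkeeping in the middle step: correctly matching each of the three upper vector bundle structures to its base/fibre splitting via the addition formula of the Example, and justifying cleanly that the three separate fibrewise-linearity conditions upgrade to a single multiaffine polynomial, so that the monomial count is legitimate. Once that reduction is in place the remaining enumeration is entirely formal.
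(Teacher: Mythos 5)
The paper states Proposition \ref{prop:statos} without proof, so there is nothing to compare your argument against; what matters is whether it stands on its own, and it does. Your reduction is the natural one: the identity on the building bundles plus the double-vector-bundle normal form \eqref{eq:philambda} applied to the three lower faces pins down the first six components, and the only content is the shape of the ultracore component $F$. Your identification of the three fibre-coordinate groups $(e_1,e_{12},e_{13},e_{123})$, $(e_2,e_{12},e_{23},e_{123})$, $(e_3,e_{13},e_{23},e_{123})$ matches the addition formula in the construction of $\Bar{\BE}$, and the three degree equations and their five solutions are correct, as is the final normalization $D=\id$ from the ultracore condition. The one step I would ask you to write out rather than assert is the passage from ``$F$ is jointly linear in each of three overlapping groups of variables'' to ``$F$ is a sum of monomials of degree one in each group.'' Separate affineness alone does not give a polynomial over $\rr$ without some regularity, but here you have genuine joint linearity (additivity \emph{and} homogeneity, since $\phi$ is a vector bundle morphism for each structure), so the decomposition is elementary: linearity in the group over $E_{1,2}$ writes $F$ as a sum of four terms, each linear in one of $e_3,e_{13},e_{23},e_{123}$ with coefficients depending on $(e_1,e_2,e_{12})$, and applying linearity in the other two groups to each term (evaluating at suitable zero sections to kill cross terms) shows each coefficient is multilinear in exactly the variables your monomial count predicts. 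With that spelled out, the enumeration is legitimate and the proof is complete. Note also that the proposition as used in the paper implicitly requires the converse --- that every tuple $(\gamma,\beta,\alpha,\lambda,\mu,\nu,\rho)$ \emph{does} define a statomorphism, so that $G_3$ is the full group --- which is a routine verification but worth a sentence.
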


Let $G_3$ denote the set of all such $(\gamma,\beta,\alpha,\lambda,\mu,\nu,\rho)$. 
This is a group under the composition
\begin{equation}
(\gamma',\beta',\alpha',\lambda',\mu',\nu',\rho')\,(\gamma,\beta,\alpha,\lambda,\mu,\nu,\rho)
= (\gamma'',\beta'',\alpha'',\lambda'',\mu'',\nu'',\rho'')
\end{equation}
where $\gamma'' = \gamma'+\gamma,\,\dots,\,\nu'' = \nu'+\nu$ and 
\begin{multline*}
\rho''(e_1,e_2,e_3) = 
\rho(e_1,\, e_2,\, e_3) + \rho'(e_1,\, e_2,\, e_3)\\
+\lambda'(e_1,\,\alpha(e_2,\, e_3)) +
\mu'(e_2,\,\beta(e_1,\, e_3)) +
\nu'(e_3,\,\gamma(e_1,\, e_2)). 
\end{multline*}

Given two decompositions $P$ and $P_0$ of a triple vector bundle $E$, there is a
unique statomorphism $\phi\in G_3$ such that $P = \phi\circ P_0$. 
Thus $\Dec(E)$ is a simply transitive $G_3$--space, or torsor. 

As in the double case, we define $\C$ to be the category whose objects are triple 
vector bundles, and whose morphisms are statomorphisms. We denote by $\C^{op}$ or by
$\C^{-1}$ its opposite category. The dualization operations $X$, $Y$ and $Z$ can now 
be regarded as functors $\mscr{C} \to \mscr{C}^{op}$ or as functors 
$\mscr{C}^{op} \to \mscr{C}$.

Let $W \in \dg_3$. There is a unique permutation $\sigma \in S_4$ such that, if $E$ is 
a triple vector bundle with building bundles $\BE$, then $E^W$ is a triple vector bundle 
with building bundles
$$
E_{\sigma(1)},\,
E_{\sigma(2)},\,
E_{\sigma(3)},\,
E_{\sigma(1)\sigma(2)},\,
E_{\sigma(1)\sigma(3)},\,
E_{\sigma(2)\sigma(3)},\,
E_{\sigma(1)\sigma(2)\sigma(3)}.
$$
Define $\pi (W) := \sigma$ and define $\varepsilon_W = \pm 1$ to be the signature of $\pi(W)$.
We can now look at $W$ as a functor $\mscr{C} \to \mscr{C}^{\varepsilon_W}$.
We now give the triple analogue of Theorem \ref{thm:submain}. 

We know that $\Dec(E)$ is a torsor modelled on $G_3$, which is (as a set) the
sum of the seven $\cinf{M}$-modules
\begin{gather*}
\Ga(E_1^*\otimes E_2^*\otimes E_{12}),\quad 
\Ga(E_1^*\otimes E_3^*\otimes E_{13}), \quad
\Ga(E_2^*\otimes E_3^*\otimes E_{23}),\\ 
\Ga(E_1^*\otimes E_{23}^*\otimes E_{123}),\quad
\Ga(E_2^*\otimes E_{13}^*\otimes E_{123}),\quad
\Ga(E_3^*\otimes E_{12}^*\otimes E_{123}),\\
\Ga(E_1^*\otimes E_2^*\otimes E_3^*\otimes E_{123}). 
\end{gather*}
In the same way it is easy to establish that $\Dec(E^X)$ is a torsor modelled on a group
which is the sum of the modules
\begin{gather*}
\Ga(E_0^*\otimes E_2^*\otimes E_{02}),\quad 
\Ga(E_0^*\otimes E_3^*\otimes E_{03}), \quad
\Ga(E_2^*\otimes E_3^*\otimes E_{23}),\\ 
\Ga(E_0^*\otimes E_{23}^*\otimes E_{023}),\quad
\Ga(E_2^*\otimes E_{03}^*\otimes E_{023}),\quad
\Ga(E_3^*\otimes E_{02}^*\otimes E_{023}),\\
\Ga(E_0^*\otimes E_2^*\otimes E_3^*\otimes E_{023}),
\end{gather*}
(see Table \ref{table:tvbXYZ}). Up to rearrangements of 
the tensor products, this is a permutation of the first list: for example, 
$E_0^*\otimes E_2^*\otimes E_{02}$, the first space in the second list, is 
$E_{123}\otimes E_2^*\otimes E_{13}^*$, a rearrangement of 
the fifth space in the first list. 
A similar result holds for $E^Y$ and $E^Z$ and thus for $E^W$ for any word $W\in\dg_3$. 

In this way one proves the following result. 

\begin{prop} 
\label{thm:submain3-i} 
Let $W \in \dg_3$ and let $E$ be a triple vector bundle 
with building bundles $\BE$.
Then there is a canonical representation of the space of decompositions 
$\Dec(E^W)$ as a torsor modelled on $G_3$.  
\end{prop}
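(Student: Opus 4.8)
The plan is to reduce the statement to a purely combinatorial bookkeeping of the seven modules comprising $G_3$, leaning on the torsor assertion already contained in Proposition~\ref{prop:statos}. First I would apply that assertion not to $E$ but to the triple vector bundle $E^W$: this shows that $\Dec(E^W)$ is a torsor modelled on the group $\Stat(\overline{E^W})$ of self-statomorphisms of the decomposed triple vector bundle built from the building bundles of $E^W$. By Proposition~\ref{prop:statos} again, $\Stat(\overline{E^W})$ is a sum of seven modules of the same form as those displayed just before the proposition, now formed from the building bundles of $E^W$. The task therefore becomes the construction of a \emph{canonical} isomorphism of groups $\Stat(\overline{E^W})\to G_3$, across which the torsor structure is then transported.

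The key observation I would exploit is that, after rewriting every tensor factor by means of the convention $E_I^*=E_{I^c}$ (so that in particular $E_{123}=E_0^*$), each of the seven $G_3$-modules is labelled by a partition of the four-element set $\{0,1,2,3\}$. Concretely, the three ``quadratic'' modules $\gamma,\beta,\alpha$ and the three ``mixed'' modules $\lambda,\mu,\nu$ become the six partitions of type $(2,1,1)$, one for each choice of the two-element block, while $\rho$ becomes the partition $\{0\},\{1\},\{2\},\{3\}$ into singletons; for instance $\gamma$ is labelled by $\{1\},\{2\},\{0,3\}$ and $\lambda$ by $\{0\},\{1\},\{2,3\}$. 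Thus the seven modules are indexed exactly by the singleton partition together with the six partitions possessing a single two-element block, and, notably, none of the three partitions of type $(2,2)$ occurs.

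With this labelling the argument becomes transparent. Since $\pi(W)=\sigma\in S_4$ acts on the building bundles by permuting the indices $\{0,1,2,3\}$, it acts on the modules by permuting their labelling partitions, and this action preserves the partition type: it fixes the singleton partition and permutes the six $(2,1,1)$-partitions transitively, through the induced action on two-element subsets. Hence the indexing set of seven partitions is $S_4$-invariant, and the seven modules of $\Stat(\overline{E^W})$ coincide, factor for factor, with the seven modules of $G_3$, up to the canonical reordering $\Ga(A\otimes B)=\Ga(B\otimes A)$. This produces the desired bijection of modules, depending only on $\sigma$ and on no choice, and transports the $\Stat(\overline{E^W})$-torsor structure of $\Dec(E^W)$ to a $G_3$-torsor structure. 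As it suffices to verify everything on the generators $X$, $Y$, $Z$, and by the symmetry of the construction essentially only on $X$ (the example worked out just before the proposition), this is a short finite check.

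The step I expect to require the most care is confirming that the partition-matching bijection respects the group structures, since the composition of $G_3$ is non-abelian through the cross-terms $\lambda'(e_1,\alpha(e_2,e_3))+\mu'(e_2,\beta(e_1,e_3))+\nu'(e_3,\gamma(e_1,e_2))$ entering $\rho''$; the subtlety is that $\sigma$ scrambles the ``quadratic''/``mixed'' distinction (under $\pi(X)=(01)$, for example, $\gamma$ is carried to $\mu$). The clarifying remark I would use is that each such cross-term pairs two $(2,1,1)$-modules whose two-element blocks are \emph{complementary} in $\{0,1,2,3\}$ (the blocks of $\alpha$ and $\lambda$ are $\{0,1\}$ and $\{2,3\}$, and similarly for the other two terms), so that the three cross-terms correspond precisely to the three ways of splitting $\{0,1,2,3\}$ into two complementary pairs. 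Since $S_4$ permutes these three splittings among themselves, the cross-term structure is $S_4$-equivariant and the partition-matching bijection is an isomorphism of groups; I would check this directly for $X$, whence it follows for $Y$, $Z$ by symmetry and for a general word $W$ by composition.
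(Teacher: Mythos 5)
Your proposal is correct and follows essentially the same route as the paper: the paper likewise applies the torsor statement to $E^W$, rewrites the seven constituent modules via the convention $E_I^*=E_{I^c}$, and observes that the resulting list is a permutation (up to reordering of tensor factors) of the seven modules defining $G_3$, checked on the generators $X$, $Y$, $Z$. Your partition-of-$\{0,1,2,3\}$ bookkeeping and the verification that the cross-terms in $\rho''$ respect the identification are a more explicit rendering of what the paper leaves implicit (the needed symmetry $\alpha\lambda=\lambda\alpha$ etc.\ is only verified later, in the proof of Proposition~\ref{prop:xyxz}).
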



We now calculate $\theta_X \co G_3 \to G_3$.  
First, we explain some notation.  For any  $I \subseteq \{0,1,2,3\}$, we write $e_I$ for a generic element 
of $E_I$.  Given a linear map $\gamma \co E_1 \otimes E_2 \to E_{12}$, we can also think of it, for instance, 
as a map $\gamma \co E_{30} \otimes E_2 \to E_{023}$; we can also think of it as simply 
$\gamma \in \Gamma(E_1^* \otimes E_2^* \otimes E_{03}^*)$.   With this abuse of notation, we can write:
$$
\pair{e_{03}}{\gamma(e_1, e_2)}= \pair{e_1}{\gamma(e_2, e_{03})} = \gamma(e_1, e_2, e_{03}).
$$
If, in addition, $\nu \in \Gamma(E_{0}^* \otimes E_{3}^* \otimes E_{12}^*)$, there is a linear map 
$\gamma \nu \in \Gamma(E_0^* \otimes E_1^* \otimes E_2^* \otimes E_3^*)$ defined by:
$$
\gamma \nu (e_0, e_1, e_2, e_3) := \pair{\nu(e_0, e_3)}{\gamma(e_1, e_2)} = 
\pair{e_3 }{  \nu \left( e_0, \gamma(e_1, e_2) \right) } = \ldots 
$$

Let $g = (\gamma,\beta,\alpha,\lambda,\mu,\nu,\rho)$ be an element of $G_3$ and let 
$\phi = \phi_g \co \Bar{\BE} \to \Bar{\BE}$ be the corresponding statomorphism.  
Then $\theta_X(\phi) \co \Bar{\BE}^X \to \Bar{\BE}^X$ is another statomorphism.  
We can write $\theta_X(\phi) = \phi_{\tilde{g}}$  for some element 
$\Tilde{g} = (\Tilde{\mu}, \Tilde{\nu}, \Tilde{\alpha}, \Tilde{\lambda}, \Tilde{\gamma}, \Tilde{\beta}, \Tilde{\rho})$ 
in $G_3$.  In order to describe $\theta_X$, we want to write $\Tilde{g}$ in terms of $g$.

As we explained with double vector bundles, if $d \in \Bar{\BE}$ and $\delta \in \Bar{\BE}^X = \Bar{\BE} \duer E_{2,3}$ 
are two elements that project to the same element in $E_{2,3}$ (so that they can be paired over $E_{2,3}$), then we 
shall have $\pair{\delta}{d} = \pair{\theta_{X}(\phi)(\delta)}{\phi(d)}$.  Let us write 
$$
d=(e_1, e_2, e_3, e_{12}, e_{13}, e_{23}, e_{123}),\qquad 
\delta=(e_0, e_2, e_3, e_{02}, e_{03}, e_{23}, e_{023}).
$$  
Then $\phi(d)$ is given by \eqref{eq:e123}, whereas
\begin{multline*}
\theta_X(\phi) (\delta) 
= (e_0,\,e_2,\,e_3,
e_{02}+\Tilde{\mu}(e_0,\, e_2),\,e_{03}+\Tilde{\nu}(e_0,\, e_3),\,
e_{23}+\Tilde{\alpha}(e_2,\, e_3),\\
e_{023}+\Tilde{\beta}(e_3,\, e_{02})+\Tilde{\lambda}(e_0,\, e_{23})+\Tilde{\gamma}(e_2,\, e_{03})+\Tilde{\rho}(e_0,\, e_2,\, e_3))
\end{multline*}
First we notice that, in order to be able to pair $\theta_{X}(\phi)(\delta)$
and ${\phi(d)}$ over $E_{2,3}$, we need to have $\Tilde{\alpha} = \alpha$.
Now:
$$
\pair{\delta}{d} = 
\pair{e_{023}}{e_1} + \pair{e_{03}}{e_{12}} + \pair{e_{02}}{e_{13}} + \pair{e_0}{e_{123}},
$$
and $\pair{\theta_{X}(\phi)(\delta)}{\phi(d)}$ is equal to 
\begin{multline*}
\pair{e_{023}}{e_1} + \Tilde{\lambda}(e_0,  e_{23}, e_1) + \Tilde{\gamma}(e_{03},,e_2, e_1) + 
\Tilde{\beta}(e_{02}, e_3, e_1)  + \Tilde{\rho}(e_0, e_2, e_3, e_1)\\ 
+ \pair{e_{03}}{e_{12}} + \Tilde{\nu}(e_{03}, e_2, e_1) + \gamma(e_0, e_3, e_{12}) 
+ \gamma \Tilde{\nu} (e_0, e_2, e_3, e_1) + \pair{e_{02}}{e_{13}}\\ 
+ \Tilde{\mu}(e_{02}, e_3, e_1) + \beta(e_0, e_2, e_{13}) + \beta \Tilde{\mu}(e_0, e_2, e_3, e_1) + \pair{e_0}{e_{123}}\\
+ \lambda(e_0, e_{23}, e_1) + \mu(e_0, e_2, e_{13}) + \nu(e_0, e_3, e_{12}) + \rho(e_0, e_2, e_3, e_1).
\end{multline*}
For these two expressions to be equal we must have:
\begin{equation*}
\begin{split}
& \lambda + \Tilde{\lambda} = 0, \quad \quad \mu + \Tilde{\mu} = 0, \quad \quad \nu + \Tilde{\nu} = 0, \\
& \beta + \Tilde{\beta} = 0 , \quad \quad \gamma + \Tilde{\gamma} = 0, \quad \quad \rho + \Tilde{\rho} + \gamma \Tilde{\nu} + \beta \Tilde{\mu} = 0.
\end{split}
\end{equation*}

To conclude we solve these equations and we obtain the action of $\theta_X$ on $G_3$, which is summarized in the 
first row of Table \ref{table:XYZ}. 
The actions of $Y$ and $Z$ are obtained in the same way. 
Recall that the notation $\gamma\nu + \beta\mu - \rho$ indicates the map $\Tilde{\rho}$ given by 
$$
\Tilde{\rho}(v_0,e_2,e_3) = \gamma(\nu(v_0,e_3),e_2) + \beta(\mu(v_0,e_2),e_3) - \rho(v_0,e_2,e_3). 
$$


\begin{table}[h]
\begin{center}
\begin{tabular}{|R||R|R|R|R|R|R|Z||R||} 
\hline
g & \gamma & \beta &\alpha &\lambda & \mu & \nu &\rho  & 1,2,3\\
\hline
\theta_X(g) & -\mu & -\nu & \alpha & -\lambda & -\gamma & -\beta & \gamma\nu + \beta\mu - \rho & 0,2,3\\
\hline
\theta_Y(g) & -\lambda & \beta & -\nu & -\gamma & -\mu & -\alpha & \alpha\lambda +\gamma\nu - \rho & 1,0,3\\
\hline
\theta_Z(g) & \gamma & -\lambda & -\mu & -\beta & -\alpha & -\nu & \alpha\lambda + \beta\mu - \rho & 1,2,0\\
\hline
\end{tabular}
\end{center}
\caption{The action of $X,Y,Z$ on $G_3$. In the final column, $i,j,k$ indicates that the domain of $\rho$ 
is $E_i\otimes E_j\otimes E_k$; this is needed in the proof of Proposition \ref{prop:xyxz}. \label{table:XYZ}}
\end{table}

The next two results can now be proved in the same way as in \S\ref{sect:intro}. 

\begin{thm} \label{thm:submain3} 
Let $W \in \dg_3$ and let $E$ be a triple vector bundle in $\mscr{C}$.  
\begin{enumerate}
\item  The bijection $\vartheta_W \co \Dec(E) \to \Dec(E^W)$ defined by 
$$
\vartheta_W(P) = (P^W)^{\varepsilon_W}, 
$$
is a map of torsors and the associated group automorphism $\theta_W \co G_3\to G_3$ does not depend on the 
choice of triple vector bundle $E$. 

\item  The map $\dg_3 \times G_3\to G_3,\quad (W,g) \mapsto \theta_W(g),$
is a group action. 
\end{enumerate}
\end{thm}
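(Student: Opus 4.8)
The plan is to transcribe the double-case proof of Theorem~\ref{thm:submain} essentially line for line, the point being that the one genuine computation — the description of $\theta_X$, $\theta_Y$, $\theta_Z$ as honest self-maps of $G_3$ — has already been carried out and is recorded in Table~\ref{table:XYZ}. The organising observation, exactly as in \S\ref{sect:df}, is that functoriality forces the closed formula $\theta_W(\phi) = (\phi^W)^{\varepsilon_W}$, so that both assertions reduce to formal manipulations of this expression together with Proposition~\ref{thm:submain3-i}.

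For part (i), I would first record that $\vartheta_W$ is a well-defined bijection $\Dec(E)\to\Dec(E^W)$. Since $W$ is a word in the invertible functors $X$, $Y$, $Z$, applying it to a decomposition $P\co E\to\Bar{\BE}$ yields a statomorphism $P^W$ between the dual of $\Bar{\BE}$ and $E^W$; because duals of decomposed triple vector bundles are again decomposed, this dual is canonically $\decom{E^W}$, and Proposition~\ref{thm:submain3-i} presents $\Dec(E^W)$ as a $G_3$-torsor. Inverting when $\varepsilon_W=-1$ then gives $\vartheta_W(P)=(P^W)^{\varepsilon_W}\in\Dec(E^W)$, with two-sided inverse $\vartheta_{W^{-1}}$. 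Next I would run a statomorphism $\phi\in G_3$ through $\vartheta_W$: when $W$ is covariant ($\varepsilon_W=1$) functoriality gives $(\phi\circ P)^W=\phi^W\circ P^W$, and when $W$ is contravariant ($\varepsilon_W=-1$) it gives $(\phi\circ P)^W=P^W\circ\phi^W$, whose inverse reverses the order a second time; in both cases one lands on $\vartheta_W(\phi\circ P)=(\phi^W)^{\varepsilon_W}\circ\vartheta_W(P)$. This exhibits $\vartheta_W$ as a morphism of $G_3$-torsors with associated automorphism $\theta_W(\phi)=(\phi^W)^{\varepsilon_W}$, a homomorphism because $(\,\cdot\,)^W$ preserves or reverses composition while inversion reverses it, and bijective with inverse $\theta_{W^{-1}}$. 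Independence of any auxiliary $P_0$ is then immediate, the formula naming no decomposition; independence of $E$ follows because the formula is evaluated entirely inside $\Bar{\BE}$ and its dual, which depend only on the building bundles $\BE$ and are therefore literally the same object for every triple vector bundle with those building bundles — this being the content of Table~\ref{table:XYZ}.

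For part (ii), I would verify the action axioms straight from the formula. That $\theta_1=\id$ is clear, the identity functor fixing every decomposition. For composition, using that functors compose, that inversion commutes with $(\,\cdot\,)^W$ for either variance, and that $\varepsilon$ is multiplicative, one computes $\theta_{W_1}\circ\theta_{W_2}=\theta_{W_1 W_2}$, so $W\mapsto\theta_W$ is a homomorphism into $\operatorname{Aut}(G_3)$ — this is the asserted action, read (as throughout the paper) at the level of representative words. That $\theta_W$ in fact depends only on the class of $W$ in $\dg_3$ I would defer to the triple analogue of Theorem~\ref{thm:main}, from which $\theta_{W W'^{-1}}=\id$ whenever $W\simeq W'$ and hence $\theta_W=\theta_{W'}$.

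The only genuinely delicate step — everything else being bookkeeping — is confirming that $\theta_W(\phi)=(\phi^W)^{\varepsilon_W}$ really is a self-map of $G_3$, i.e.\ that dualizing a statomorphism of $\Bar{\BE}$ again produces a statomorphism of a decomposed bundle parametrised by the same seven component maps. This rests on Proposition~\ref{thm:submain3-i} and on the explicit pairing computation already made in producing Table~\ref{table:XYZ}; once that identification is in hand, the remainder is the sign-and-variance bookkeeping sketched above, parallel to \S\ref{sect:df}. I expect the main risk of error to be tracking the contravariance of $X$, $Y$, $Z$ consistently through the composition law in part (ii), rather than any new conceptual difficulty.
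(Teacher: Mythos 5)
Your proposal is correct and follows essentially the same route as the paper, which itself gives no separate argument for Theorem~\ref{thm:submain3} beyond the remark that it ``can now be proved in the same way'' as the double case, relying on the pairing computation behind Table~\ref{table:XYZ} and on Proposition~\ref{thm:submain3-i} exactly as you do. Your explicit tracking of the variance $\varepsilon_W$ and the deferral of well-definedness on classes in $\dg_3$ to Theorem~\ref{thm:main3} are consistent with the paper's conventions.
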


\begin{thm} \label{thm:main3}
For $W \in \dg_3$ with $\pi(W) = 1$, the following are equivalent:
\begin{enumerate}
\item $\theta_W$ is the identity on $G_3$. 
\item There exists a natural isomorphism between the identity functor and $W$.
\end{enumerate}
\end{thm}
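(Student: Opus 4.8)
The plan is to establish Theorem \ref{thm:main3} by adapting verbatim the argument of Theorem \ref{thm:main} from the double case, since the excerpt explicitly states that ``the next two results can now be proved in the same way as in \S\ref{sect:intro}.'' The whole mechanism rests on three facts already in place: $\Dec(E)$ is a $G_3$-torsor (Proposition \ref{prop:statos}), the bijection $\vartheta_W$ between $\Dec(E)$ and $\Dec(E^W)$ induces a well-defined torsor automorphism $\theta_W \co G_3 \to G_3$ independent of $E$ (Theorem \ref{thm:submain3}(i)), and when $\pi(W)=1$ the two decomposed bundles $\Bar{\BE}$ and $\Bar{\BE^W}$ literally coincide, so $\vartheta_W$ maps $\Dec(E)$ to decompositions of $E^W$ into the \emph{same} model bundle $\Bar{\BE}$.

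First I would prove $\textrm{(i)} \implies \textrm{(ii)}$. Assuming $\theta_W = \id_{G_3}$, for each triple vector bundle $E$ I choose a decomposition $P \co E \to \Bar{\BE}$ and apply $W$ to obtain $P^W \co E^W \to \Bar{\BE^W} = \Bar{\BE}$; the candidate statomorphism is $s(E) := (P^W)^{-1} \circ P \co E \to E^W$. The first substep is to show $s(E)$ is independent of $P$: any other decomposition is $\phi_\lambda \circ P$ for a unique $\lambda \in G_3$, and functoriality gives $(\phi_\lambda \circ P)^W = \phi_{\theta_W(\lambda)} \circ P^W = \phi_\lambda \circ P^W$ because $\theta_W$ is the identity, so the two composites agree. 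The second substep is naturality: given a statomorphism $f \co E_1 \to E_2$, I pick a decomposition $P_2$ of $E_2$, set $P_1 := P_2 \circ f$, and check that the square relating $s(E_1)$, $s(E_2)$, $f$ and $f^W$ commutes — exactly as in the diagram chase in the proof of Theorem \ref{thm:main}. Since every morphism of $\C$ is invertible and each $s(E)$ is a statomorphism hence an isomorphism, $s$ is then a natural isomorphism.

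For $\textrm{(ii)} \implies \textrm{(i)}$, I assume a natural isomorphism $s \co \id \Lra W$ exists and must deduce $\theta_W(\lambda) = \lambda$ for every $\lambda \in G_3$. Fixing a triple vector bundle $E$ with $\Bar{\BE} = \Bar{\BE^W}$, I take two decompositions $P_1, P_2$ with $P_2 = \phi_\lambda \circ P_1$ and form the naturality square for $s$ applied to the identity statomorphism of $E$ together with the two decompositions; commutativity forces $\phi_{\theta_W(\lambda)} = (\phi_\lambda)^W = \phi_\lambda$, and since $\phi_{(-)}$ is injective (the correspondence $G_3 \cong \Stat(\Bar{\BE})$), we get $\theta_W(\lambda) = \lambda$. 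This mirrors the final diagram in the proof of Theorem \ref{thm:main}.

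The routine parts are the two diagram chases, which transcribe directly from the double case. The one genuinely triple-specific point I would verify carefully is that $\phi_{\theta_W(\lambda)} = (\phi_\lambda)^W$ as statomorphisms of $\Bar{\BE}$, i.e.\ that dualizing the decomposed statomorphism $\phi_\lambda$ yields precisely $\phi_{\theta_W(\lambda)}$ with $\theta_W$ the automorphism computed in Table \ref{table:XYZ}; this is the content that makes the abstract torsor automorphism $\theta_W$ agree with the concrete effect of $W$ on statomorphisms, and it is the step where the nonabelian group law of $G_3$ (with its $\rho$-correction term) could in principle cause trouble. Everything else is formal, so I expect no serious obstacle beyond bookkeeping.
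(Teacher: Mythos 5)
Your proposal is correct and takes essentially the same approach as the paper, which proves Theorem \ref{thm:main3} precisely by transcribing the two diagram chases from the proof of Theorem \ref{thm:main}, with $G_3$ and $\Bar{\BE}$ in place of $\Wii$ and $\ABC$. The one point you flag for careful verification --- that $(\phi_\lambda)^W = \phi_{\theta_W(\lambda)}$ --- is exactly what Theorem \ref{thm:submain3} and the pairing computation leading to Table \ref{table:XYZ} supply, so nothing further is needed.
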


Using Table~\ref{table:XYZ} we can determine which words act as the identity. 

\begin{prop}
\label{prop:xyxz}
The words $(XYXZ)^2,\ (YZYX)^2,\ (ZXZY)^2$ have order $2$ in $\dg_3$, and 
\begin{equation}
\label{eq:K4}
(YZYX)^2 (XYXZ)^2= (ZXZY)^2.
\end{equation}
\end{prop}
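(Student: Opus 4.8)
The plan is to work entirely through the action $\theta$ of $\dg_3$ on $G_3$ and to exploit the faithfulness encoded in Theorem \ref{thm:main3}. Each of the three words lies in $K_4$ (each has $\pi = 1$), and Theorem \ref{thm:main3} says that such a word equals $1$ in $\dg_3$ exactly when its automorphism of $G_3$ is the identity; that is, $\theta$ restricted to $K_4$ is injective. Since $W \mapsto \theta_W$ is a homomorphism into $\operatorname{Aut}(G_3)$ (Theorem \ref{thm:submain3}(ii)), this reduces everything to a computation inside $\operatorname{Aut}(G_3)$: the order of a word $W \in K_4$ equals the order of $\theta_W$, and two such words coincide iff their automorphisms do. So I will show that $\theta_{(XYXZ)^2}$ is a nontrivial involution, and that $\theta_{(YZYX)^2}\circ\theta_{(XYXZ)^2} = \theta_{(ZXZY)^2}$.

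First I would use the cyclic symmetry $X \mapsto Y \mapsto Z \mapsto X$, under which $(XYXZ)^2 \mapsto (YZYX)^2 \mapsto (ZXZY)^2 \mapsto (XYXZ)^2$; this symmetry permutes the rows of Table \ref{table:XYZ} correspondingly, so it suffices to compute $\theta_{(XYXZ)^2}$ in full and transport the result. To compute it I would compose the eight generator-actions read off Table \ref{table:XYZ}, writing an element of $G_3$ as a pair $(h,\rho)$ with linear part $h = (\gamma,\beta,\alpha,\lambda,\mu,\nu)$ and cubic part $\rho$. On the linear part each generator acts by a permutation-with-signs; because $\pi\big((XYXZ)^2\big) = 1$ the net permutation is trivial, so $h$ returns to its own slots, though possibly with sign changes that must be tracked. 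On the cubic part each generator sends $\rho \mapsto -\rho + (\text{quadratic in the current } h)$, so after eight steps the coefficient of $\rho$ is $(-1)^8 = +1$ and $\theta_{(XYXZ)^2}(h,\rho) = (Lh,\ \rho + Q(h))$ for an explicit sign-diagonal $L$ with $L^2 = \id$ and an explicit quadratic $Q$.

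From this normal form the order-two claim is immediate: iterating gives $\rho \mapsto \rho + Q(h) + Q(Lh)$, and the role of the sign pattern in $L$ is precisely that $Q(Lh) = -Q(h)$, so $\theta_{(XYXZ)^2}^2 = \id$; since $L \neq \id$ the automorphism is not itself the identity, whence $(XYXZ)^2$ has order exactly $2$ in $\dg_3$, and likewise for the other two words by the cyclic symmetry. For the relation \eqref{eq:K4} I would compute $\theta_{(YZYX)^2}\circ\theta_{(XYXZ)^2}$ from the two normal forms---the sign-diagonal parts multiply to that of $(ZXZY)^2$, while the quadratic parts combine, remembering that the outer factor sees the linear data already transformed by the inner one---and check it termwise against $\theta_{(ZXZY)^2}$; equality of the two automorphisms then yields \eqref{eq:K4} by injectivity of $\theta$ on $K_4$.

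The main obstacle will be the bookkeeping in the cubic component across the eight-fold composition: at each step one both negates the accumulated $\rho$ and adds a fresh quadratic built from the already-transformed linear data (the $\gamma\nu + \beta\mu$-type feedback in Table \ref{table:XYZ}), and the domains of these maps shift among the index sets in $\{0,1,2,3\}$ as recorded in the final column of the table. Keeping the signs and the index-matchings consistent---so that $Q(Lh) = -Q(h)$ comes out exactly, and so that the quadratic parts of the composite in \eqref{eq:K4} agree on the nose---is where the care is required; the six linear components, by contrast, reduce to a routine permutation-and-sign check.
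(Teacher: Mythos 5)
Your plan is sound and, for the order-two claims, is essentially the paper's own argument: compute $\theta_{(XYXZ)^2}$ by composing the rows of Table \ref{table:XYZ}, observe that the result is a nontrivial diagonal sign involution on $(\gamma,\beta,\alpha,\lambda,\mu,\nu)$ fixing $\rho$, and invoke Theorem \ref{thm:main3} to transfer order and (non)triviality from $\operatorname{Aut}(G_3)$ back to $\dg_3$. Two points of comparison. First, your normal form $\theta_{(XYXZ)^2}(h,\rho)=(Lh,\rho+Q(h))$ is correct but in fact $Q=0$: the eight quadratic feedback terms cancel outright, and verifying this cancellation is exactly where the paper expends its care (one must identify terms such as $\lambda\alpha$ and $\alpha\lambda$ after reinterpreting their domains among the index sets of $\{0,1,2,3\}$, as in the discussion around \eqref{eq:rhoXY}); your fallback $Q(Lh)=-Q(h)$ would also suffice, but the stronger statement is what the computation yields, and your cyclic-relabelling symmetry $1\mapsto2\mapsto3\mapsto1$ (which does permute the three rows of Table \ref{table:XYZ} and the three words as you claim) is a genuine economy over the paper, which simply asserts the other two rows of Table \ref{table:K4}. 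Second, for the relation \eqref{eq:K4} your route differs from the paper's: you compose the two diagonal involutions and match the product against $\theta_{(ZXZY)^2}$, concluding by injectivity of $\theta$ on $K_4$ (valid, since $K_4$ is a subgroup so the product word again lies in $\ker\pi$), whereas the paper manipulates the word $(YZYX)^2(XYXZ)^2$ using the already-established relations $XYX=YXY$ and $X^2=Y^2=Z^2=1$. Your version is the more uniform of the two and avoids the paper's somewhat elliptic ``eventually we arrive at'' step; the only convention you should fix explicitly is the order of composition in $\theta_{WW'}$ (the paper notes the two natural conventions differ in general but agree on $K_4$, so the check of \eqref{eq:K4} is unaffected). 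With the $\rho$-column bookkeeping actually carried out, your argument is complete.
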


\pf
Consider $(XYXZ)^2$. From Table~\ref{table:XYZ} the first six columns of Table~\ref{table:xyxz2} 
are easily established. 

\begin{table}[H]
\begin{center}
\begin{tabular}{|R||R|R|R|R|R|R|Z||} 
\hline
 & \gamma & \beta &\alpha &\lambda & \mu & \nu &\rho  \\
\hline
X & -\mu & -\nu & \alpha & -\lambda & -\gamma & -\beta & \gamma\nu + \beta\mu - \rho\\
\hline
YX & \mu & \alpha & -\nu & \gamma & \lambda & -\beta & \rho - \beta\mu -\gamma\nu\\ 
\hline
XYX & -\gamma & \alpha & \beta & -\mu & -\lambda & \nu & -\rho\\
\hline
XYXZ & -\gamma & \mu & \lambda & -\alpha & -\beta & -\nu & \rho - \alpha\lambda - \beta\mu\\
\hline
(XYXZ)^2 & \gamma & -\beta & -\alpha & -\lambda & -\mu & \nu & \rho\\
\hline
\end{tabular}
\end{center}
\caption{Calculation of the action of $(XYXZ)^2$. We omit $\theta$ from the notation.\label{table:xyxz2}}
\end{table}

Now consider the final column. The operation of $X$ sends $\rho$ to $\gamma\nu + \beta\mu - \rho$,
considered as a map $E_0\otimes E_2\otimes E_3\to E_{023}$. Applying $Y$ to this we get
\begin{equation}
\label{eq:rhoXY}
(-\lambda)(-\alpha) + \beta(-\mu) - (\alpha\lambda + \gamma\nu - \rho),
\end{equation}
which should be considered as a map $E_0\otimes E_1\otimes E_3\to E_{013}.$

The term $\lambda\alpha$ in (\ref{eq:rhoXY}), when considered as a map $E_0\otimes E_1\otimes E_3\to E_{013}$, 
is
$$
\langle (\lambda\alpha)(e_0,e_1,e_3)\bk e_2\rangle = \langle \lambda(e_1,\alpha(e_2,e_3))\bk e_0\rangle.
$$
On the other hand, the term $\alpha\lambda$ in (\ref{eq:rhoXY}), when considered as a map $E_0\otimes E_1\otimes E_3\to E_{013}$, 
is 
$$
\langle (\alpha\lambda)(e_0,e_1,e_3)\bk e_2\rangle = \langle \alpha(\lambda(e_0,e_1),e_3))\bk e_2\rangle.
$$
These are equal and so we may briefly write $\alpha\lambda = \lambda\alpha$. Thus (\ref{eq:rhoXY})
simplifies to $\rho - \beta\mu -\gamma\nu.$

The remaining three entries in the $\rho$ column are obtained in the same way, and it is now 
clear that $(XYXZ)^2$ has order 2.

For reference, we list the actions of all three elements in Table~\ref{table:K4}. 

\begin{table}[h]
\begin{center}
\begin{tabular}{|R||R|R|R|R|R|R|Z||} 
\hline
         & \gamma & \beta &\alpha &\lambda & \mu & \nu &\rho  \\
\hline
(XYXZ)^2 & \gamma & -\beta & -\alpha & -\lambda & -\mu & \nu & \rho\\
\hline
(YZYX)^2 & -\gamma & -\beta & \alpha & \lambda & -\mu & -\nu & \rho\\
\hline
(ZXZY)^2 & -\gamma & \beta & -\alpha & -\lambda & \mu & -\nu & \rho\\
\hline
\end{tabular}
\end{center}
\caption{Actions on $G_3$ of the nonidentity elements of $K_4$. \label{table:K4}}
\end{table}

We prove (\ref{eq:K4}) by applying $XYX = YXY$ and its conjugates repeatedly to the LHS. 
Eventually we arrive at $(YZXZ)^2$. Since this has order 2 and each of $X,Y,Z$ have order 2, 
this is equal to $(ZXZY)^2$. 
\pfend

We remark that, given the action of $X$ and the action of $Y$ on $G_3$, there are two ways 
to understand how the action of the composition $YX$ should be calculated.  They are duals 
of each other, and they correspond to thinking in terms of ``frames'' or thinking in 
terms of ``coordinates''.  If the reader is obtaining different calculations, this may be 
the reason.  They are equivalent, however, and they both should agree on $K_4$.  

From $(XYXZ)^2\neq I$ it follows that $XYX$ and $Z$ do not commute. Notice however that
the actions of $XYX$ and $Z$ on the set of building bundles do commute. The action of 
$XYX$ on the decomposed triple vector bundle, as distinguished from the action on the statomorphisms, 
takes place entirely in the roof double vector bundle (in terms of Figure~\ref{fig:tvb}) and 
preserves the floor double vector bundle. The action on $G_3$ however shows that the operations 
of $XYX$ and $Z$ are `entangled'. 

From \ref{prop:xyxz} it follows that $K_4$ is the Klein 4-group. As an immediate
consequence we have:

\begin{thm}
The order of $\dg_3$ is $96.$
\end{thm}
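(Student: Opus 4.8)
The plan is to read the order directly off the short exact sequence that the preceding material has assembled, namely
$$
1 \longrightarrow K_4 \longrightarrow \dg_3 \xrightarrow{\ \pi\ } S_4 \longrightarrow 1.
$$
Here $\pi$ is surjective because $\sigma_1,\sigma_2,\sigma_3$ generate $S_4$, and $K_4=\ker\pi$ by definition. Applying the first isomorphism theorem to $\pi$ gives $|\dg_3| = |\ker\pi|\cdot|\operatorname{im}\pi| = |K_4|\cdot|S_4|$, so the entire computation reduces to determining $|K_4|$, since $|S_4|=24$ is classical. (The count needs only the orders; the nonsplitting of the extension is irrelevant here and is left to the final section.)

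First I would invoke Proposition \ref{prop:xyxz}, which carries out the essential work. It shows that the three generators $(XYXZ)^2$, $(YZYX)^2$, $(ZXZY)^2$ of $K_4$ each have order $2$, and that one is the product of the other two via \eqref{eq:K4}. Writing $a=(XYXZ)^2$ and $b=(YZYX)^2$, so that $ab=(ZXZY)^2$, these are two commuting involutions whose product is again a nontrivial involution distinct from each factor; such elements generate a copy of the Klein four-group $\{1,a,b,ab\}$. Hence $|K_4|=4$, and therefore $|\dg_3|=4\cdot24=96$, as claimed.

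The hard part is not this final arithmetic but the verification, already accomplished in \ref{prop:xyxz}, that $|K_4|=4$ rather than something smaller --- equivalently, that the three entangling elements are genuinely nontrivial. This is precisely where the earlier count of $72$ goes astray: each of $a$, $b$, $ab$ acts trivially on the seven building bundles, so any analysis confined to the building bundles cannot distinguish them from the identity. What separates them from $1$ is their action on the statomorphism group $G_3$, recorded in Table \ref{table:K4}: the three sign patterns there are pairwise distinct and none is trivial, so by Theorem \ref{thm:main3}, which identifies triviality in $\dg_3$ with triviality of $\theta_W$ on $G_3$, the three elements are distinct and nonidentity in $\dg_3$. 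Their commutativity and the absence of any further collapse of $K_4$ below order $4$ follow from the additive, abelian form of the $\theta$-action displayed in that table, confirming that $K_4$ is the Klein four-group asserted immediately before the theorem.
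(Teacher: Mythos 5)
Your proposal is correct and follows the paper's own route: the paper deduces the theorem as an immediate consequence of Proposition \ref{prop:xyxz}, which shows $K_4=\ker\pi$ is the Klein four-group, combined with the surjectivity of $\pi\co\dg_3\to S_4$, giving $|\dg_3|=4\cdot 24=96$. Your additional remarks on why the elements of $K_4$ are nontrivial (via their action on $G_3$ and Theorem \ref{thm:main3}) accurately reflect the content the paper establishes in Table \ref{table:K4} and the surrounding discussion.
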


In \cite{Mackenzie:2005dts} the order of $\dg_3$ (there denoted $\EuScript{VB}_3$) was given 
as 72. Further, it was stated that $(XYZ)^4 = I$. Applying Table~\ref{table:XYZ}, the action 
of $(XYZ)^4$ on $G_3$ is as shown in Table~\ref{table:xyz4}, so $(XYZ)^4$ has order 2.   
More specifically, $(XYZ)^4=(ZXZY)^2$ is a non-trivial element of $K_4$.

\begin{table}[h]
\begin{center}
\begin{tabular}{|R||R|R|R|R|R|R|Z||} 
\hline
         & \gamma & \beta &\alpha &\lambda & \mu & \nu &\rho  \\
\hline
(XYZ)^4 & -\gamma & \beta & -\alpha & -\lambda & \mu & -\nu & \rho \\
\hline
\end{tabular}
\end{center}
\caption{Action of $(XYZ)^4$ on $G_3$. \label{table:xyz4}}
\end{table}

\begin{thm} \  \label{thm:descriptionDG3}
\begin{enumerate}
\item The group $\dg_3$ is an  
extension of $S_4$ by $K_4$; that is, there is a short exact sequence 
$\ses{K_4}{\dg_3}{S_4}$.   
\item  As an $S_4$--module, $K_4$ is isomorphic to the normal subgroup of $S_4$ 
$$\{  1, (12)(30), (23)(10), (13)(20) \}$$ with action by conjugation.
\item  The extension $\ses{K_4}{\dg_3}{S_4}$ is not split.
\end{enumerate}
\end{thm}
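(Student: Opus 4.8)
The plan is to treat the three assertions in increasing order of difficulty, since (i) is essentially a bookkeeping statement, (ii) is a computation matched against a standard structural fact about $S_4$, and (iii) carries the real content.

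For (i), I would simply assemble what is already in place: $\pi\co\dg_3\to S_4$ is a surjective homomorphism because $X$, $Y$, $Z$ map to generators $\sigma_1,\sigma_2,\sigma_3$ of $S_4$, its kernel is $K_4$ by definition, and Proposition \ref{prop:xyxz} identifies $K_4$ with the Klein four-group. This is exactly the short exact sequence $\ses{K_4}{\dg_3}{S_4}$.

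For (ii), the first step is to note that, because $K_4$ is abelian, conjugation in $\dg_3$ restricts on $K_4$ to an action $\rho\co S_4\to\operatorname{Aut}(K_4)$ that factors through $S_4=\dg_3/K_4$. Writing $a=(XYXZ)^2$, $b=(YZYX)^2$, $c=(ZXZY)^2$ for the three involutions of $K_4$ (so $ab=c$ by \eqref{eq:K4}), the relations \eqref{eq:actiononK4}, together with the fact that these elements have order two, give $XaX^{-1}=c$, $YaY^{-1}=b$, $ZaZ^{-1}=a$. Since each of $X,Y,Z$ has order two in $\dg_3$, the induced automorphism is an involution of the Klein group, so as permutations of $\{a,b,c\}$ we read off $\rho(\sigma_1)=(a\,c)$ and $\rho(\sigma_2)=(a\,b)$. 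These two transpositions already generate $\operatorname{Aut}(K_4)\cong S_3$, so $\rho$ is surjective and $\ker\rho$ is the unique normal subgroup of order four in $S_4$, namely $V=\{1,(12)(30),(23)(10),(13)(20)\}$; in particular $\rho(\sigma_3)\neq1$, forcing $\rho(\sigma_3)=(b\,c)$. I would then exhibit the group isomorphism $\psi\co K_4\to V$ sending $a,b,c$ to $(12)(30),(23)(10),(13)(20)$ and check that it intertwines the two conjugation actions on the generators $\sigma_1,\sigma_2,\sigma_3$ --- a short direct computation of conjugates in $S_4$. (Alternatively, surjectivity of $\rho$ alone already forces $K_4\cong V$, since any two Klein-four $S_4$-modules whose structure map is onto $\operatorname{Aut}\cong S_3$ are isomorphic.)

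Part (iii) is the substantive point and where I expect the real obstacle. A splitting would provide lifts $\xi=uX$, $\eta=vY$, $\zeta=wZ$ with $u,v,w\in K_4$ such that the subgroup they generate maps isomorphically to $S_4$ under $\pi$; in particular all the Coxeter relations hold among $\xi,\eta,\zeta$, and I would extract a contradiction from the single relation $(\sigma_1\sigma_2\sigma_1\sigma_3)^2=1$. Since $\pi(\xi\eta\xi\zeta)=\pi(XYXZ)=(12)(30)$, we may write $\xi\eta\xi\zeta=m\,(XYXZ)$ for a unique $m\in K_4$, whence
\begin{equation*}
(\xi\eta\xi\zeta)^2 = m\,\big((XYXZ)\,m\,(XYXZ)^{-1}\big)\,(XYXZ)^2 .
\end{equation*}
The crucial observation is that $\pi(XYXZ)=(12)(30)\in V=\ker\rho$, so $XYXZ$ centralizes $K_4$ and the conjugate of $m$ equals $m$; as $K_4$ has exponent two, the right-hand side collapses to $(XYXZ)^2$, which by Proposition \ref{prop:xyxz} is a nontrivial element of $K_4$. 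Hence $(\xi\eta\xi\zeta)^2\neq1$ for every choice of $u,v,w$, and no splitting can exist. Equivalently and more transparently, a splitting would make the preimage $\pi^{-1}(V)$ the internal direct product $K_4\times s(V)$ of two copies of the Klein group (the action of $V$ being trivial), hence of exponent two; but $\pi^{-1}(V)$ contains $XYXZ$, which has order $4$ since its square is a nontrivial element of $K_4$. The main difficulty is precisely locating this obstruction --- recognizing that $(12)(30)$ lies in the kernel of the $K_4$-action, so that the conjugation correction drops out and the failure of the relation is forced independently of the lifts; once this is in hand, both formulations are short.
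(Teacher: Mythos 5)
Your proposal is correct and follows essentially the same route as the paper: part (i) is the assembled exact sequence, part (ii) is read off from \eqref{eq:actiononK4}, and part (iii) rests on exactly the paper's obstruction, namely that $\pi(XYXZ)=(12)(30)$ lies in the kernel $V$ of the action on $K_4$, so that (using exponent two) the square of any lift of $(12)(30)$ is the nontrivial element $(XYXZ)^2$ of Proposition \ref{prop:xyxz}. Your phrasing via lifts and the failed Coxeter relation is just the mirror image of the paper's computation $(u,\sigma)^2=(u^2,1)=(1,1)$ in the semidirect product.
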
 

\pf
(i) has already been established, and (ii) follows from \eqref{eq:actiononK4}. 
We now prove (iii).

Assume the extension is split.  Let $f \co \dg_3 \to K_4 \rtimes S_4$ be an isomorphism, 
where $\rtimes$ represents semidirect product.   Let $\sigma=(12)(30)$.
Then $f(XYXZ) = (u, \sigma)$ for some element $u \in K_4$.  Using the semidirect product rule:
$$f((XYXZ)^2)=\left(u,\sigma\right)^2=(u (\sigma \cdot u), \sigma^2) = (u^2, 1) = (1, 1),$$
where we have used the fact that $\sigma$ acts trivially on $K_4$.  We have reached 
a contradiction.  
\pfend

The three nontrival elements of $K_4$ are in some ways comparable to the dualization of 
an ordinary vector bundle $A$: isomorphisms between $A$ and $A^*$ exist but are not natural, 
and likewise there are statomorphisms between a triple vector bundle $E$ and $E^W$ where 
$W\in K_4,\, W\neq 1,$ but there is no canonical statomorphism. However, whereas ordinary 
dualization is contravariant, the elements of $K_4$ are covariant functors. 

For arbitrary $n$--fold vector bundles, (i) of Theorem \ref{thm:descriptionDG3} will still 
be true \cite{Gracia-SazM:nfold}. Instead of (ii) we will give a combinatorial description 
of $K_{n+1}$.  The corresponding extension will be split if and only if $n$ is even.


\section{The group $\dg_3$}
\label{sect:group}

In this section we identify $\dg_3$ as a semi-direct product and give its normal and
conjugacy class structure. 

First, consider the words $a=(ZXY)^2$, $b=(XYZ)^2$, and $c=(YZX)^2$.  Let $H$ be the 
subgroup of $\dg_3$ generated by $a$, $b$, and $c$.  These three elements have order 
4, commute with each other, and satisfy $abc=1$.   Thus $H \cong \zz_4 \times \zz_4$.   
A direct calculation shows that $H$ is a normal subgroup of $\dg_3$.   Second, 
consider $\dg_2 \cong S_3$ as a subgroup of $\dg_3$ generated by $X$ and $Y$.  
Then $S_3$ acts on $H$ by permuting $a$, $b$, and $c$.   
In addition, $H \cap \dg_2 = \{ 1 \}$, producing:

\begin{thm}
$\dg_3$ is isomorphic to the semidirect product $(\mathbb{Z}_4 \times \mathbb{Z}_4) \rtimes S_3$, 
as described above.
\end{thm}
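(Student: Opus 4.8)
The plan is to verify directly that the three generators $a=(ZXY)^2$, $b=(XYZ)^2$, $c=(YZX)^2$ behave as claimed and then assemble the semidirect product from the pieces already in hand. First I would compute the action of each of $a$, $b$, $c$ on $G_3$ using Table~\ref{table:XYZ}, in the same mechanical fashion as the calculation of $(XYXZ)^2$ in Table~\ref{table:xyxz2}; this shows each has order $4$ (its square lands in $K_4$, specifically in the nonidentity elements listed in Table~\ref{table:K4}, and is not the identity). The relation $abc=1$ and the pairwise commuting should be checked partly at the level of the projections $\pi(a),\pi(b),\pi(c)\in S_4$ and partly via the action on $G_3$: since $a$, $b$, $c$ all project into $K_4=\ker\pi$ (they are squares of even words, being $6$-letter words whose images are $3$-cycles squared, hence products of two transpositions)\footnote{More precisely $\pi(ZXY)=(03)(01)(02)$ is a $4$-cycle, so $\pi((ZXY)^2)$ is a product of two transpositions, lying in the Klein subgroup.}, one can read off $H$ entirely from its action on $G_3$ together with $\pi$.

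Once $H\cong\zz_4\times\zz_4$ is established, I would verify that $H$ is normal. The cleanest route is conjugation of generators: compute $X a X^{-1}$, $X b X^{-1}$, $Y a Y^{-1}$, and so on, reducing each using $X^2=Y^2=Z^2=1$ and $XYX=YXY$ (and the analogous braid relations), and check the result lies in $H$. This is the triple-case analogue of the identities \eqref{eq:actiononK4}, extended from $K_4$ to the larger group $H$. In parallel I would confirm that $\dg_2=\langle X,Y\rangle\cong S_3$ acts on $\{a,b,c\}$ by permutation, which follows from how $X$ and $Y$ permute the underlying words $ZXY$, $XYZ$, $YZX$ up to cyclic reordering and the braid relation; conjugating $b=(XYZ)^2$ by $X$ and by $Y$ should land on $a$ and $c$ (up to inverses that are absorbed since these elements have the needed symmetry), exhibiting the full $S_3$ on the three generators.

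The final ingredient is $H\cap\dg_2=\{1\}$. Here I would use the homomorphism $\pi\co\dg_3\to S_4$: the subgroup $\dg_2=\langle X,Y\rangle$ maps isomorphically onto the copy of $S_3$ fixing the index $3$, whereas $H$ maps into the Klein subgroup $K_4$, and $S_3\cap K_4=\{1\}$ inside $S_4$. Thus any element of $H\cap\dg_2$ lies in $\ker\pi\cap\dg_2=\{1\}$ (the latter because $\pi|_{\dg_2}$ is injective), giving the trivial intersection. With $H$ normal, $\dg_2$ a complement meeting it trivially, and $H\dg_2=\dg_3$ (a count: $|H|\cdot|S_3|=16\cdot 6=96=|\dg_3|$), the standard recognition criterion for an internal semidirect product yields $\dg_3\cong(\zz_4\times\zz_4)\rtimes S_3$ with the stated action.

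The main obstacle I expect is the normality check: unlike the $K_4$ computation, which was governed by a clean $3\times 7$ table, here I must track order-$4$ elements and their conjugates through the braid relations, and the bookkeeping of which inverse appears (since $a,b,c$ are not involutions) is where sign and direction errors creep in. I would mitigate this by computing everything through the faithful-on-$\ker\pi$ action $\theta$ on $G_3$ together with $\pi$ into $S_4$, so that each conjugation identity is reduced to a finite check in $\operatorname{Aut}(G_3)\times S_4$ rather than to word manipulation alone.
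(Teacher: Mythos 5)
Your proposal follows essentially the same route as the paper: the paper's proof is precisely the recognition of the internal semidirect product from $H=\langle a,b,c\rangle\cong\zz_4\times\zz_4$ normal, $\dg_2=\langle X,Y\rangle\cong S_3$ permuting $a$, $b$, $c$, and $H\cap\dg_2=\{1\}$, with all verifications reducible to finite computations via the action on $G_3$ together with $\pi$, exactly as you suggest. The only slip is the parenthetical claim that $a$, $b$, $c$ ``project into $K_4=\ker\pi$'' --- their images under $\pi$ are the nontrivial elements of the Klein subgroup $V\subset S_4$ (so $H=\pi^{-1}(V)$, as the paper notes, and your footnote has the correct statement that $\pi(ZXY)$ is a $4$-cycle, not a $3$-cycle) --- but this does not affect the argument.
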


All normal subgroups of $\dg_3$ can be obtained with help of the homomorphism 
$\pi \co \dg_3 \to S_4$.

\begin{thm}
The normal subgroups of $\dg_3$, apart from the trivial ones and $K_4 = \ker \pi$, are
\begin{itemize}
\item $\pi^{-1} (A_4)$, which has index 2.  This consists of the words in $X$, $Y$, 
and $Z$ that have an even number of letters.  Alternatively, this consists of the 
duality functors which are covariant.  
\item The subgroup $H$ described above, which has index 6, and which is also 
$\pi^{-1}(V)$, where $V$ is the normal subgroup of $S_4$ of order 4.
\end{itemize}
\end{thm}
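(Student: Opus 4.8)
The plan is to classify the normal subgroups $N\trianglelefteq\dg_3$ by means of the projection $\pi\co\dg_3\to S_4$, keeping track of the two invariants $N\cap K_4$ and $\pi(N)$. Since $\pi$ is surjective, $\pi(N)$ is normal in $S_4$ and hence lies in the short list $1,V,A_4,S_4$ of all normal subgroups of $S_4$. First I would dispose of the normal subgroups containing $K_4$: by the correspondence theorem these are exactly the preimages $\pi^{-1}(1)=K_4$, $\pi^{-1}(V)=H$, $\pi^{-1}(A_4)$ and $\pi^{-1}(S_4)=\dg_3$, of indices $24,6,2$ and $1$. Here $H=\pi^{-1}(V)$ because the generators $a,b,c$ of $H$ map under $\pi$ into $V$, so $H\subseteq\pi^{-1}(V)$, and $|H|=16=|\pi^{-1}(V)|$. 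These preimages already supply every subgroup named in the statement, so it remains only to show that no further normal subgroups occur.

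For an arbitrary normal $N$, I would next pin down $N\cap K_4$. This is a subgroup of $K_4$ that is normal in $\dg_3$, hence invariant under the conjugation action of $\dg_3$ on $K_4$; since that action factors through $\pi$ and realises $K_4$ as the $S_4$-module of Theorem \ref{thm:descriptionDG3}(ii), $N\cap K_4$ is an $S_4$-submodule. As $S_4$ permutes the three involutions of $K_4\cong V$ transitively (indeed as the full $S_3$), the only submodules are $1$ and $K_4$, so $N\cap K_4\in\{1,K_4\}$. If $N\cap K_4=K_4$ then $K_4\subseteq N$ and $N$ is one of the four subgroups already listed.

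The substance of the proof is the remaining case $N\cap K_4=1$, which I claim forces $N=1$. Here $\pi$ is injective on $N$, so $N\cong\pi(N)$, and the key assertion is that $V\not\subseteq\pi(N)$. Indeed, if $V$ were contained in $\pi(N)$, then every $v\in V$ would equal $\pi(n)$ for some $n\in N$, necessarily lying in $\pi^{-1}(V)=H$, so that $\pi(N\cap H)=V$; as $(N\cap H)\cap K_4\subseteq N\cap K_4=1$, the map $\pi$ would restrict to an isomorphism $N\cap H\xrightarrow{\sim}V\cong\zz_2\times\zz_2$. But $H\cong\zz_4\times\zz_4$ by the structural analysis above, and its elements of order dividing $2$ form precisely its $2$-torsion subgroup, which is exactly $K_4$; hence any copy of $\zz_2\times\zz_2$ inside $H$ must equal $K_4$, contradicting $(N\cap H)\cap K_4=1$. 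Therefore $\pi(N)$ contains no copy of $V$, leaving only $\pi(N)=1$, and then $N\subseteq K_4$ gives $N=N\cap K_4=1$.

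Assembling the two cases yields the complete list $1,K_4,H,\pi^{-1}(A_4),\dg_3$, which is the assertion. I expect the only real obstacle to be the case $N\cap K_4=1$ together with $V\subseteq\pi(N)$, since this is exactly where the non-split nature of the extension of $S_4$ by $K_4$ must enter; I would make the obstruction visible through the identification of $K_4$ with the $2$-torsion of $H\cong\zz_4\times\zz_4$ rather than through a cocycle computation. Alternatively, one can eliminate the subcases $\pi(N)=A_4$ and $\pi(N)=S_4$ directly by the argument of Theorem \ref{thm:descriptionDG3}(iii): every lift of the involution $(12)(30)\in V\subseteq A_4$ — for example $XYXZ$ — squares to the nontrivial element $(XYXZ)^2\in K_4$ and so has order $4$, whence $\pi$ admits no section over $A_4$ (a fortiori none over $S_4$), and a complement $N$ would provide one.
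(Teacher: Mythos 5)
Your proof is correct, and it takes a genuinely different route from the paper: the published proof simply observes that the listed subgroups are normal and delegates the completeness of the list to a GAP computation, whereas you give a self-contained structural argument. Your reduction via the two invariants $N\cap K_4$ and $\pi(N)$ is standard, and the one case with real content --- $N\cap K_4=1$ with $V\subseteq\pi(N)$ --- is handled cleanly by noting that such an $N$ would meet $H$ in a copy of $\zz_2\times\zz_2$ disjoint from $K_4$, while the $2$-torsion of $H\cong\zz_4\times\zz_4$ is a single subgroup which (having order $4$ and containing $K_4$) must equal $K_4$. This is exactly the point where the non-splitness of $\ses{K_4}{\dg_3}{S_4}$ enters, and your torsion argument makes it visible without any cocycle computation; your alternative ending, lifting the involution $(12)(30)$ and reusing the order-$4$ computation from Theorem \ref{thm:descriptionDG3}(iii), is equally valid. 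What your approach buys is a human-verifiable proof that also explains \emph{why} no exotic normal subgroups exist (they would force a partial splitting over $V$); what the paper's approach buys is brevity and an independent machine check. The only items you leave untouched are the descriptive glosses on $\pi^{-1}(A_4)$ (words of even length, equivalently the covariant functors), but these follow immediately from $\pi$ sending each generator to a transposition and each dualization being contravariant, and the paper does not prove them either.
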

\pf
It is clear that these are normal subgroups.  We obtained that there are 
no others by use of  GAP \cite{GAP449}.
\pfend

Also from GAP, or equally by hand calculation, 
we find that there are nine nontrivial 
conjugacy classes. In Table \ref{table:cc} we have listed their size, representative 
elements, their orders, and the action of these elements on $G_3$.   
We can read all the normal subgroups from this table.  Apart from the identity, $K_4$ consists 
of the first conjugacy class, $H$ consists of the first four conjugacy classes, 
and $\pi^{-1}(A_4)$ consists of the first five conjugacy classes.

\begin{table}[h]
\begin{center}
\begin{tabular}{||Z|R|R|R|R|R|R|R|R|Z||} 
\hline
& \textrm{s}  & \textrm{o} & \alpha & \beta &\gamma &\lambda & \mu & \nu &\rho  \\
\hline\hline
  (XYXZ)^2 & 3 & 2 & -\alpha & -\beta & \gamma & -\lambda & -\mu & \nu & \rho \\
\hline
 (XYZ)^2 & 3 & 4 & -\lambda & \beta & \nu & \alpha & \mu & -\gamma & \rho - \alpha \lambda - \gamma \nu \\
\hline
 (ZYX)^2 & 3 & 4 & \lambda & \beta & -\nu & -\alpha & \mu & \gamma & \rho - \alpha \lambda - \gamma \nu \\
\hline
 XZXY & 6 & 4 &  \lambda & -\mu & \nu & -\alpha & -\beta & -\gamma & \rho - \alpha \lambda - \gamma \nu \\ 
\hline
 XY & 32 & 3 & \beta & -\nu & \lambda & \mu & \gamma & -\alpha & \rho - \alpha \lambda - \gamma \nu \\
\hline
 Z & 12 & 2 &  -\mu & -\lambda & \gamma & -\beta & -\alpha & -\nu & -\rho + \alpha \lambda + \beta \mu \\
\hline
 XYZYXZY & 12 & 4 & \mu & -\lambda & -\gamma & \beta & -\alpha & \nu & -\rho + \alpha \lambda + \beta \mu \\  
\hline
 XYZ & 12 & 8 & -\gamma & -\mu & -\lambda & \nu & -\beta & \alpha & -\rho + \alpha \lambda + \beta \mu \\
\hline
 ZYX & 12 & 8 & \nu & -\mu & -\alpha &  \gamma & -\beta & \lambda & -\rho + \beta \mu + \gamma \nu \\
\hline
\end{tabular}
\end{center}
\caption{Conjugacy class structure of $\dg_3$. The leftmost column gives a representative
element of each class, followed by the size of the class (s), the order of any element of 
the class (o), and the action of the representative on $G_3$.\label{table:cc}}   
\end{table}

To conclude, there is a faithful linear representation of $\dg_3$ on $\rr^6$.  
Neglecting the $\rho$ column of Table~\ref{table:XYZ}, the action of $X,Y,Z\in \dg_3$ on $G_3$ defines
a linear action on $\rr^6$ by matrices with entries $0, +1$ or $-1$. 
From Table~\ref{table:cc}, the only element of $\dg_3$ to fix all of
$\gamma,\dots,\nu$ is the identity. 



\newcommand{\noopsort}[1]{} \newcommand{\singleletter}[1]{#1} \def\cprime{$'$}
  \def\cprime{$'$}
\providecommand{\bysame}{\leavevmode\hbox to3em{\hrulefill}\thinspace}
\providecommand{\MR}{\relax\ifhmode\unskip\space\fi MR }
\providecommand{\MRhref}[2]{%
  \href{http://www.ams.org/mathscinet-getitem?mr=#1}{#2}
}
\providecommand{\href}[2]{#2}

%

\end{document}